%% file: bondal5d.tex
\documentclass{article}

\input{bondal-preamble}

\begin{document}

\title{Bondal's conjecture in dimension five}

\author{St\'ephane Druel\thanks{CNRS/Universit\'e Claude Bernard Lyon 1, \email{stephane.druel@math.cnrs.fr}} \and
Jorge Vit\'orio Pereira\thanks{IMPA, \email{jvp@impa.br}} \and
Brent Pym\thanks{McGill University, \email{brent.pym@mcgill.ca}} \and
Fr\'ed\'eric Touzet\thanks{Universit\'e de Rennes, \email{frederic.touzet@univ-rennes.fr}}}

\maketitle

\begin{abstract}
    Bondal's conjecture in Poisson geometry gives lower bounds on the degeneracy loci of Poisson Fano manifolds, where the rank of the Poisson structure drops.  By work of several authors, it was previously known to hold for Fano manifolds of dimension at most four.  We give the first proof of this conjecture for Fano manifolds of dimension five, and partial results for Fano manifolds of all odd dimensions.  The proof uses: (i) an algebraic integrability criterion for codimension-one foliations on weak Fano manifolds, extending a previous result of the first author; (ii) the ``modular residues'' of Poisson structures introduced by Gualtieri and the third author; and (iii) a cohomological constraint on invariant subvarieties for Pfaff fields, extending earlier results of Esteves--Kleiman to the case in which the Pfaff distribution on the subvariety admits a closed strongly directed positive current.
\end{abstract}

\tableofcontents

\section{Introduction}

Let $\X$ be a complex manifold and let $\sigma$ be a holomorphic Poisson structure on $\X$, i.e.~a global bivector field $\sigma\in \coH[0]{\X,\wedge^2\cT{\X}}$ such that $[\sigma,\sigma]=0$, where $[-,-]$ is the Schouten bracket.  Recall that $\sigma$ induces a singular foliation of $\X$ by symplectic leaves. We denote by $\Dgn[2k]{\sigma}$ the \defn{degeneracy locus} along which the rank of $\sigma$ is at most $2k$, i.e.~the union of the leaves of dimension at most $2k$.  We recall that $\Dgn[2k]{\sigma}$ is the vanishing locus of the Pfaffian
\[
\sigma^{k+1} \in \coH[0]{\X,\wedge^{2k+2}{\cT{\X}}},
\]
so that it naturally has the structure of a closed analytic subspace of $\X$.

A guiding expectation, due to Bondal, is that when the curvature of $\X$ is positive, the degeneracy loci are much larger than the classical formulae for dimensions of skew-symmetric determinantal varieties would predict. Namely, he conjectured the following.
\begin{conjecture}[{\cite{Bondal1993}, Bondal 1993}]
    Let $\X$ be a Fano manifold, and let $\sigma$ be a Poisson structure on $\X$.  Then $\Dgn[2k]{\sigma}$ has an irreducible component of dimension at least $2k+1$ for all integers $k$ such that $0 \leq 2k < \dim \X$.
\end{conjecture}

The following cases of the conjecture are known.  

\begin{enumerate}
    \item\label{it:polishchuk} For odd-dimensional manifolds, say $\dim \X=2n+1$, Polishchuk~\cite{Polishchuk1997} proved that Bondal's conjecture is true for the ``maximal'' degeneracy locus $\Dgn[2n-2]{\sigma}$.
    \item\label{it:beauville} Beauville~\cite{Beauville2011} observed that Polishchuk's argument applies more generally to prove the conjecture for $\Dgn[2r-2]{\sigma}$, where $2r$ is the maximal rank of $\sigma$, independent of the dimension of $\X$.
    \item\label{it:even-dim} When $\dim \X =2n$ is even, the ``maximal'' degeneracy locus $\Dgn[2n-2]{\sigma}$ is the vanishing locus of the anticanonical section $\sigma^n$, and hence the conjecture holds for $\Dgn[2n-2]{\sigma}$ simply because the anticanonical bundle of a Fano manifold is nontrivial.  In \cite{Gualtieri2013}, Gualtieri and the third author proved that the conjecture also holds for the ``submaximal'' degeneracy locus $\Dgn[2n-4]{\sigma}$.
\end{enumerate}
In particular, these results together imply that Bondal's conjecture is true for all Fano manifolds of dimension at most four.

Our main result in this paper establishes the conjecture for the submaximal degeneracy locus of odd-dimensional Poisson Fano manifolds, and thus gives the first proof of the conjecture when $\dim \X = 5$:

\begin{theorem}[see \autoref{thm:bondal-odd}]\label{theorem:main}
    Let $\X$ be a Fano manifold of dimension $2n+1$ with $n>1$, and let $\sigma$ be a Poisson structure on $\X$. Then $\Dgn[2n-4]{\sigma}$ has an irreducible component of dimension at least $2n-3$.
\end{theorem}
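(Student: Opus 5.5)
We argue by contradiction. Suppose every irreducible component of $\Dgn[2n-4]{\sigma}$ has dimension at most $2n-4$. First we reduce to the case where $\sigma$ has generic rank $2n$. If the maximal rank is $2r<2n$, then Beauville's extension of Polishchuk's argument (item~\ref{it:beauville}) gives a component of $\Dgn[2r-2]{\sigma}$ of dimension at least $2r-1$. In that case $\Dgn[2n-4]{\sigma}\supseteq\Dgn[2r-2]{\sigma}$ when $r\le n-1$. Indeed, if $r\le n-1$ then $\Dgn[2n-4]{\sigma}$ is either all of $\X$ (when $2r\le 2n-4$) or equals $\Dgn[2r-2]{\sigma}$ with $2r-1=2n-3$; either way we are done. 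So we may assume the generic rank is $2n$.

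Then $\sigma^n$ is a nonzero section of $\wedge^{2n}\cT{\X}\cong\Omega^1_{\X}\otimes\omega_{\X}^{-1}$. It defines a twisted $1$-form $\alpha\in\coH[0]{\X,\Omega^1_{\X}\otimes K_{\X}^{-1}}$, integrable because it is built from a Poisson structure, whose kernel is the codimension-one foliation $\mathcal F$ by symplectic leaves. The zero locus of $\alpha$ is $\Dgn[2n-2]{\sigma}$.

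Write $D$ for the divisorial part of $\mathrm{Sing}$. The normal bundle satisfies $N_{\mathcal F}\cong K_{\X}^{-1}(-D)$, so $K_{\mathcal F}=K_{\X}+N_{\mathcal F}=-D$ and $-K_{\mathcal F}=D$ is effective. The main dichotomy is whether $D$ vanishes.

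\emph{Case $D\neq 0$.} Here I would use the modular residue of Gualtieri--Pym. Along each component of $D$, the Poisson structure induces a residue Poisson structure or modular vector field. The submaximal degeneracy locus meets $D$ in the zero locus of an associated odd-dimensional Pfaffian-type section. Restricting to $D$ (of dimension $2n$), we would get an anticanonical-type section whose vanishing locus has dimension $\ge 2n-2$, or at least $2n-3$ after accounting for the parity shift. The goal is to show it lies in $\Dgn[2n-4]{\sigma}$, mimicking the even-dimensional submaximal argument of \cite{Gualtieri2013}. If that fails, $D$ would have to be strongly invariant with vanishing modular residue, and we would pass to the next case.

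\emph{Case $D=0$ (so $K_{\mathcal F}$ trivial, $N_{\mathcal F}=K_{\X}^{-1}$).} Then $\mathrm{Sing}(\mathcal F)=\Dgn[2n-2]{\sigma}$ has codimension $\ge2$. By the algebraic integrability criterion for codimension-one foliations on (weak) Fano manifolds (item (i) in the abstract, the extension of Druel's result), a foliation with positive normal bundle whose singular locus is small must be algebraically integrable. More precisely, we would apply it to the restriction to $\Dgn[2n-2]{\sigma}$ and its Pfaff field. Algebraic integrability makes the closures of the leaves give a closed positive current directed by the Pfaff distribution. The cohomological constraint on invariant subvarieties (item (iii)) then forces the invariant subvariety $\Dgn[2n-2]{\sigma}$, on which $\sigma$ has rank $\le 2n-2$, to contain a locus of rank $\le 2n-4$ of dimension $\ge 2n-3$. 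This is the analogue of Esteves--Kleiman: if a Pfaff field of rank $r$ with positive twist has small singular locus, a Chern class computation yields a contradiction unless the singular set has large dimension.

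The main obstacle I expect is this last step. One must verify that the degeneracy locus $\Dgn[2n-2]{\sigma}$, which has a component of dimension $2n-1$ by Polishchuk, carries an induced Pfaff field whose twist is positive enough. One must also produce the closed strongly directed positive current needed to apply the Esteves--Kleiman-type bound.
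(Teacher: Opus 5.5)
Your reduction to generic rank $2n$ is fine. The problem is the dichotomy $D=0$ versus $D\neq 0$: it breaks down exactly where you invoke the integrability criterion. If $D=0$, then $\sing(\cF)$ is all of $\Dgn[2n-2]{\sigma}$. By Polishchuk's theorem, which you cite yourself, this locus has a component of dimension at least $2n-1$, i.e.\ of codimension exactly two. So the hypothesis $\codim\sing(\cF)\ge 3$ of \autoref{theorem:fibration} \emph{never} holds in your Case $D=0$. Nor can the criterion be ``applied to $\Dgn[2n-2]{\sigma}$'', since that locus is neither smooth nor weak Fano.

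Conversely, $\codim\sing(\cF)\ge 3$ can only occur when $D\neq 0$, and then no analysis of $D$ is needed. The foliation $\cF$ is given by a morphism $\X\to\PP^1$, a general fibre is a Poisson Fano manifold of dimension $2n$, and \cite[Theorem 27]{Gualtieri2013} applied to it gives the required component of $\Dgn[2n-4]{\sigma}$. Your Case $D\neq 0$ instead offers only a hoped-for analogue of the even-dimensional argument on $D$ itself. That divisor is in general singular and not Fano, and nothing in your sketch forces anything into $\Dgn[2n-4]{\sigma}$. The split that matters is whether $\sing(\cF)$, which always has codimension at least two, has codimension $\ge 3$ or exactly $2$.

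The genuinely missing part is the codimension-two case. Take a component $\Y$ of $\sing(\cF)$ of dimension $2n-1$ and its Poisson singular locus $\Si=\sing(\Y,\sigma|_{\Y})$, and argue by $\dim\Si$.
\begin{itemize}
\item If $\dim\Si=2n-3$, parity of the rank puts $\Si$ inside $\Dgn[2n-4]{\sigma}$.
\item If $\dim\Si=2n-2$, a top-dimensional component either already lies in $\Dgn[2n-4]{\sigma}$ or is the closure of an algebraic symplectic leaf. In the latter case, \autoref{prop:leaves-hodge} together with $\coh[0,2n-2]{\X}=0$ forces its own Poisson singular locus, which lies in $\Dgn[2n-4]{\sigma}$, to have dimension $2n-3$.
\item The case $\dim\Si\le 2n-4$ is excluded using the modular residue $\rho=\Resmod[n-1]{\sigma}|_{\Y\setminus\Si}$, an anticanonical section on the smooth part. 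If $\rho$ has no zeros, $\iota_{\sigma^{n-1}}\rho^{-1}$ is a closed nowhere-vanishing $1$-form defining the codimension-one symplectic foliation of $\Y\setminus\Si$. This gives a closed strongly directed positive current, again contradicting $\coh[0,2n-2]{\X}=0$. If $\rho$ vanishes somewhere, \autoref{lemma:GP64} applied to a component of its zero locus produces a component of $\Si$ of dimension at least $2n-3$.
\end{itemize}
So the positive currents come from leaf closures and from this closed $1$-form, not from algebraic integrability of $\cF$. The obstruction is the vanishing of $\coh[0,2n-2]{\X}$ on a Fano manifold, not a Chern class computation on $\Dgn[2n-2]{\sigma}$. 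None of these steps appears in your proposal.
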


\begin{corollary}
    Bondal's conjecture holds for all Fano manifolds of dimension five.
\end{corollary}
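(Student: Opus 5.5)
The corollary should follow by a direct case check: for $\dim\X=5$ there are only three values of $k$ to treat, and each is covered either by a trivial observation or by one of the results already quoted. Write $\dim \X = 2n+1$ with $n=2$. The condition $0\le 2k<5$ leaves exactly $k\in\{0,1,2\}$. For each such $k$ I will show that $\Dgn[2k]{\sigma}$ has an irreducible component of dimension at least $2k+1$.

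\emph{Case $k=2$.} Here $\Dgn[4]{\sigma}$ is the vanishing locus of $\sigma^3\in\coH[0]{\X,\wedge^6\cT{\X}}$. Since $\wedge^6\cT{\X}=0$ on a $5$-fold, this section is identically zero. Hence $\Dgn[4]{\sigma}=\X$, which is irreducible of dimension $5=2k+1$.

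\emph{Case $k=1$.} This is the maximal degeneracy locus $\Dgn[2n-2]{\sigma}$ for $n=2$. By Polishchuk's theorem \cite{Polishchuk1997}, recalled in item~\ref{it:polishchuk} of the introduction, it has a component of dimension at least $3=2k+1$. The degenerate situation where $\sigma$ has rank at most $2$ everywhere causes no difficulty. In that situation $\Dgn[2]{\sigma}=\X$, so the bound holds trivially.

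\emph{Case $k=0$.} This is the submaximal locus $\Dgn[2n-4]{\sigma}$ with $n=2>1$. Theorem~\ref{theorem:main} applies directly and gives a component of dimension at least $2n-3=1=2k+1$.

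Combining the three cases proves the corollary. There is no real obstacle: the only substantive input is Theorem~\ref{theorem:main} for $k=0$, whose proof occupies the rest of the paper. The remaining steps are bookkeeping, namely matching the indices $2k$ to $2n-2$ and $2n-4$, and noting that the case $k=2$ is vacuous.
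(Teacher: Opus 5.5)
Your proof is correct and matches the argument the paper intends. The case $k=2$ is trivial since $\Dgn[4]{\sigma}=\X$. The case $k=1$ is Polishchuk's theorem on the maximal degeneracy locus $\Dgn[2]{\sigma}$. The case $k=0$ is \autoref{theorem:main} with $n=2$; as stated in the introduction, that theorem already covers Poisson structures of smaller generic rank via Beauville's reduction.
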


Let us give a brief overview of the argument.  First, note that by case \ref{it:beauville} above, we may assume without loss of generality that the generic rank of $\sigma$ is equal to $2n$.  We then consider the holomorphic foliation $\cF$ defined by saturating the image of the contraction map induced by $\sigma$ (see \autoref{SS:Poissonstructures});  it agrees with the symplectic foliation defined by $\sigma$ where the latter has maximal rank, but in general, its leaves are unions of symplectic leaves of $\sigma$. 

There are two main situations to consider.  The first is that the singular locus of $\cF$ has codimension at least three.  In this situation, we prove that the closure of a generic leaf is a Fano manifold of dimension $2n$.  Thus, in this situation, \autoref{theorem:main} follows from case \ref{it:even-dim} above.  In fact, we prove a more general statement about codimension-one foliations with mild singularities on weak Fano manifolds, similar to a result of the first author~\cite{DruelWeakFano} for regular foliations of arbitrary codimension:
\begin{theorem}[see \autoref{theorem:fibration}]\label{thm:fibration-intro}
    Let $\X$ be a weak Fano manifold, and let $\cF$ be a codimension-one holomorphic foliation of $\X$ whose singular locus has codimension at least three.  Then $\cF$ is the foliation defined by the fibres of a morphism $\X \to \PP^1$ with irreducible fibres.
\end{theorem}

The second situation to consider is that the singular locus of $\cF$ has codimension two (the maximal possible).  In this situation, we treat several subcases depending on the dimensions of various subvarieties of the singular locus of $\cF$, using two main tools.  The first tool is the theory of ``modular residues'' of Poisson structures, introduced by Gualtieri and the third author~\cite{Gualtieri2013} to treat case \ref{it:even-dim} above; we recall the key aspects in \autoref{sec:modres}.  The second tool is an analogue of Esteves--Kleiman's work on Pfaff fields on schemes~\cite{EstevesKleiman} in the setting of compact K\"ahler manifolds; see \autoref{lemma:pfaff-kahler}, \autoref{prop:esteves_kleiman} and \autoref{prop:leaves-hodge}.  The latter gives cohomological constraints on the existence of Poisson subvarieties with sufficiently mild singularities.  A typical statement of this type is the following, which generalizes our result \cite[Theorem 1.2]{globalweinstein} on compact symplectic leaves to closures of symplectic leaves whose boundaries are not too big:
\begin{proposition}
    Let $\X$ be a compact K\"ahler manifold, and let $\sigma$ be a holomorphic Poisson structure on $\X$.  Suppose that $\X$ contains closed subvarieties $\Si\subset \Y\subset\X$ such that $\Y\setminus \Si$ is a symplectic leaf, and $\codim(\Si,\Y) \ge 2$.  Then $\coh[0,\dim \Y]{\X} > 0$.
\end{proposition}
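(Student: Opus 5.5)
The plan is to build a nonzero class in $\coH[\dim \Y]{\X,\cO_\X}$, i.e.~an element of $\coH[0,\dim\Y]{\X}$, by integrating a holomorphic volume form along $\Y$. Set $d=\dim \Y = 2k$, which is even since $\Y\setminus\Si$ is a symplectic leaf. On the leaf $L=\Y\setminus\Si$ we have the symplectic form $\omega_L$, inverse to $\sigma|_L$, and hence the nowhere-vanishing volume form $\omega_L^k$. Pair a class $[\alpha]$, with $\alpha$ a $\bar\partial$-closed $(0,d)$-form on $\X$, against
\[
[\alpha]\longmapsto \int_{L} \omega_L^k\wedge \overline{\phantom{x}}\!\!\!\!\alpha|_L ,
\]
or more precisely consider the current $T=\int_L \omega_L^k\wedge(\cdot)$ acting on $(0,d)$-forms. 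The key point is to make this well defined and $\bar\partial$-closed. Then take $\alpha=\overline{\beta}$, where $\beta$ is obtained from a K\"ahler form: the pairing of $T$ with the conjugate of a suitable form gives $\int_L \omega_L^k\wedge\overline{\omega_L^k}>0$ if finite. So the plan is to show directly that the $(d,0)$-form $\omega_L^k$ on $L$ defines, via Hodge symmetry, a nonzero class.

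The key steps, in order, are as follows. First, use $\codim(\Si,\Y)\ge2$ to show that $\omega_L^k$ extends as a holomorphic section of $\omega_{\tilde\Y}$ (or of the reflexive dualizing sheaf) on a resolution $\tilde\Y\to\Y$. Here I would invoke the paper's Esteves--Kleiman type results (\autoref{lemma:pfaff-kahler}, \autoref{prop:esteves_kleiman}): $\sigma$ restricts to a Pfaff field on $\Y$, and its $k$th power gives a map $\Omega^{d}_\Y\to\cO_\Y$ that is an isomorphism off $\Si$. Dually, this yields a section of the (reflexive) canonical sheaf whose inverse $\sigma^k$ is holomorphic. By Hartogs, on the normalization the form $\omega_L^k$ extends across codimension $\ge2$ as a reflexive $d$-form. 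Second, show that this form is $L^2$, i.e.~$\int_L \omega_L^k\wedge\overline{\omega_L^k}<\infty$; equivalently, it extends holomorphically to $\tilde\Y$ (having at worst canonical-type singularities). I would argue that $\sigma^k|_{\Y}$ being holomorphic and nonvanishing generically forces the anticanonical pairing to be effective, so $\omega^k$ has no poles on exceptional divisors over $\Si$, because $\sigma^k$ lifts to a logarithmic-free Pfaff field on $\tilde\Y$ (Pfaff fields lift to resolutions by the Esteves--Kleiman lifting). Third, having a nonzero holomorphic $d$-form $\eta$ on the compact K\"ahler manifold $\tilde\Y$, I push $\bar\eta$ forward: $\mu_*\bar\eta$ defines a $\bar\partial$-closed $(0,d)$-current on $\X$, hence a class in $\coH[d]{\X,\cO_\X}$. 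To see that it is nonzero, pair it with a $\partial$-closed $(d,0)$-type test. Better still, use the Gysin map $\coH[d,0]{\tilde\Y}\to\coH[d+c,c]{\X}$, with $c=\codim\Y$, and cup with a K\"ahler power $\kappa^{c}$ restricted appropriately. Concretely, the class $\mu_*[\bar\eta]\in\coH[c,d+c]{\X}$ paired with $\mu_*[\eta]$ gives $\int_{\tilde\Y}\eta\wedge\bar\eta\cdot(\text{positive})\neq0$. Hard Lefschetz then transports this to a nonzero class in $\coH[0,d]{\X}$.

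I expect the main obstacle to be the second step: showing that the symplectic volume form has no poles along exceptional divisors over $\Si$. This is exactly where the ``closed strongly directed positive current'' extension of Esteves--Kleiman from the abstract should enter. The current $[\Y]$ restricted to the Pfaff distribution gives such a current, and \autoref{prop:esteves_kleiman} should yield that the characteristic-class pairing forces holomorphic extension, in the spirit of \cite[Theorem 1.2]{globalweinstein}, whose compact-leaf proof this generalizes. The last step, nonvanishing via Lefschetz, should be routine Hodge theory.
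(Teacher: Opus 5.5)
Your starting object is the right one: on $\X^\circ=\X\setminus\Si$, the current $\psi\mapsto\int_L\omega_L^k\wedge\psi|_L$ is, up to a constant, exactly the class in $\mathsf{H}^{q}(\X^\circ,\omega_{\X})$ that the paper uses, where $n=\dim\X$ and $q=n-d$. However, both steps you use to globalize this class and to prove it nonzero fail.

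Step~2 asserts that $\omega_L^k$ is $L^2$ near $\Si$, i.e.\ that it extends holomorphically to a resolution $\tilde{\Y}$. This is false in general, and your justification runs backwards. If $\sigma^k$ lifts to a holomorphic section of $\omega_{\tilde{\Y}}^{-1}$, then $\omega_L^k=(\sigma^k)^{-1}$ acquires \emph{poles}, not zeros, along the exceptional divisors where the lift vanishes. A local example already shows this. Take $\sigma=\iota_{\dd f}(\partial_x\wedge\partial_y\wedge\partial_z)$ on $\mathbb{C}^3$ with $f=x^3+y^3+z^3$. Then $\{f=0\}\setminus\{0\}$ is a single symplectic leaf, $\Si=\{0\}$ has codimension two in $\Y=\{f=0\}$, and $\omega_L$ is the residue of $\dd x\wedge\dd y\wedge\dd z/f$. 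On the blow-up of the vertex, which is smooth with exceptional elliptic curve $E$, $\omega_L$ has a simple pole along $E$ (discrepancy $-1$), while $\sigma$ lifts and vanishes along $E$. Hence $\int\omega_L\wedge\overline{\omega_L}=\infty$. Hartogs on the normalization only gives a reflexive extension. Moreover, \autoref{lemma:pfaff-kahler} and \autoref{prop:esteves_kleiman} say nothing about extending forms.

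Step~3 fails independently. With $q>0$, the product $\mu_*[\bar\eta]\cdot\mu_*[\eta]$ lies in $\mathsf{H}^{n+q,n+q}(\X)=0$, and nothing relates it to $\int_{\tilde{\Y}}\eta\wedge\bar\eta$. In fact a nonzero holomorphic top form on $\tilde{\Y}$ need not give a nonzero Gysin class in $\mathsf{H}^{n,q}(\X)$: for a smooth quartic surface in $\PP^3$, the Gysin image of its $2$-form lies in $\mathsf{H}^{3,1}(\PP^3)=0$. At this stage your argument no longer uses $\sigma$ on $\X$, so it cannot succeed.

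The missing idea is to test against the class $\iota_{\sigma^k}(\gamma^d)\in\mathsf{H}^{0,d}(\X)$ built from a K\"ahler form $\gamma$. Along $L$, $\omega_L^k\wedge\iota_{\sigma^k}(\gamma^d)|_L$ is a nonzero constant multiple of $\gamma^d|_L$, so the pairing is a nonzero multiple of $\int_{\Y}\gamma^d>0$, however singular $\omega_L^k$ is near $\Si$.

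The paper makes this rigorous without any resolution:
\begin{itemize}
\item The integration current of $\Y^\circ=\Y\setminus\Si$ is strongly directed by $\sigma^k$ on $\X^\circ$, so by \autoref{lemma:closed} it defines the class above on $\X^\circ$.
\item Since $\codim_{\X}\Si\ge q+2$, this class extends uniquely to $\mathsf{H}^q(\X,\omega_{\X})$, and Harvey's theorem extends the current.
\item The duality between $\sigma^k$ and its associated twisted $q$-form turns the pairing into $\int_{\X}\gamma^d\wedge[\Y]>0$.
\end{itemize}
This is exactly \autoref{prop:esteves_kleiman} applied to the Pfaff field $\sigma^k\colon\forms[d]{\X}\to\cO{\X}$, whose hypotheses hold verbatim here. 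The codimension hypothesis is used to extend a cohomology class, not to control the singularities of $\omega_L^k$.
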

More generally, we obtain a similar statement for smooth enough Poisson subvarieties whose foliations support a strongly directed positive current.  In the context of the proof of \autoref{theorem:main}, the latter are obtained using Weinstein's modular vector field.

\subsection{Acknowledgements}
The research described here was initiated at the workshop ``Foliations, birational geometry and applications'', held at the Centre international de rencontres math\'ematiques (CIRM) in February, 2025.  We thank CIRM and the workshop organizers for the stimulating event and environment.
J.V.P. and S.D.~acknowledge support from the CAPES-COFECUB
project Ma1017/24, funded by the French Ministry for Europe
and Foreign Affairs, the French Ministry for Higher Education and CAPES.  J.V.P.~was also supported by CNPq (PQ Scholarship 304690/2023-6 and Projeto
Universal 408687/2023-1 ``Geometria das Equações Diferenciais Algébricas''), and FAPERJ (Grant number E26/200.100/2026). B.P.~was supported by the Natural Sciences and Engineering Research Council of Canada (NSERC), through Discovery Grant RGPIN-2020-05191. F.T.~would like to thank the Henri Lebesgue Center for constant support.

\section{Foliations on weak Fano manifolds}\label{section:druel}
In this section we review some basic facts about foliations and their intersection numbers, and apply them to prove \autoref{thm:fibration-intro} from the introduction.

\subsection{Foliations}
Let $\X$ be a complex manifold.  By a \defn{foliation} $\cF$ of dimension $p$ on $\X$, we mean a saturated, involutive subsheaf $\cT{\cF} \subset \cT{\X}$ of rank $p$.  Its \defn{codimension} is $q = \dim \X - p$.  

The \defn{conormal sheaf} of $\cF$ is the annihilator $\coN{\cF}\subset\forms[1]{\X}$ of $\cT{\cF}$.  We will mainly be interested in the case $q=1$, in which case $\coN{\cF}$ is an invertible sheaf (a line bundle).  The \defn{normal sheaf} of $\cF$ is $\cN{\cF}:=(\coN{\cF})^\vee$; by definition, it is a torsion-free sheaf of rank $q$.

Let $\X^\circ \subseteq \X$ be the open set where $\cT{\cF}$ is a subbundle of $\cT{\X}$. The \defn{singular locus} of $\cF$ is $\sing(\cF):=\X \setminus \X^\circ$. We say that $\cF$ is \defn{regular} if its singular locus is empty.

\subsection{Intersection numbers}
Let $\cF$ be a codimension-one foliation on a complex manifold $\X$.  Our aim in this subsection is to give some constraints on the intersection number
\[
\cN{\cF}^{\dim \Y} \cdot \Y := \int_{\Y}c_1(\cN{\cF})^{\dim \Y}
\]
where $\Y\subset \X$ is a compact subvariety, with a particular focus on the case in which $\Y$ is a curve.

The intersection number will depend on whether the inclusion $\Y\hookrightarrow \X$ is tangent to $\cF$, in the following sense.

\begin{definition}
A morphism $\phi : \Y \to \X$ is \defn{tangent to $\cF$} if the composition of the natural maps
\[
\begin{tikzcd}
\phi^*\coN{\cF} \ar[r] & \phi^*\forms[1]{\X} \ar[r] & \forms[1]{\Y}
\end{tikzcd}
\]
is identically zero.  
\end{definition}
Recall that when $\cF$ is regular, $\cN{\cF}$ carries a flat partial connection in the direction of $\cT{\cF}$ (the Bott connection), which implies that the class $c_1(\cN{\cF})^q \in \coH[q]{\X,\forms[q]{\X}}$ lifts to an element of $\coH[q]{\X,\det \coN{\cF}}$.  As observed by the first author in \cite{DruelCodimensiontwo}, 
this phenomenon persists for singular foliations whose singular locus is sufficiently small, simply because the cohomology of locally free sheaves is invariant under restriction to complements of closed subvarieties of sufficiently high codimension.  Namely, we have the following.

\begin{lemma}[{\cite[Lemma 3.6]{DruelCodimensiontwo}}]
Let $\X$ be a complex manifold and let $\cF$ be a holomorphic foliation of codimension $q$ whose singular locus has codimension at least $q+2$. Then $c_1(\cN{\cF})^q \in \coH[q]{\X,\forms[q]{\X}}$ lies in the image of the natural map $\coH[q]{\X,\det\coN{\cF}} \to \coH[q]{\X,\forms[q]{\X}}$.
\end{lemma}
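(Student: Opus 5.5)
We argue first on the regular locus $\X^\circ := \X\setminus\sing(\cF)$, where $\cF$ is a genuine regular foliation of codimension $q$, and then extend across the singular locus using its high codimension. On $\X^\circ$ the normal bundle $\cN{\cF}|_{\X^\circ}=\cT{\X}/\cT{\cF}$ carries the Bott partial connection $\nabla$ along $\cT{\cF}$, defined by $\nabla_v \bar w = \overline{[v,w]}$. It is flat because $\cT{\cF}$ is involutive. The same holds for $\det\cN{\cF}$ and dually for $\det\coN{\cF}$.

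\textbf{Step 1: a lift on $\X^\circ$.} We compute $c_1(\cN{\cF})$ in $\coH[1]{\X^\circ,\forms[1]{\X^\circ}}$ as the Atiyah class of the line bundle $L=\det\cN{\cF}$. Choose local frames $e_\alpha$ of $L$ on a cover $U_\alpha$ that are flat for the partial connection; this is possible since a flat partial connection along a regular foliation admits flat local frames, by Frobenius coordinates. Then the transition functions $g_{\alpha\beta}$ are constant along the leaves, so $d\log g_{\alpha\beta}$ annihilates $\cT{\cF}$, i.e.\ is a section of $\coN{\cF}$. Thus the Čech cocycle $(d\log g_{\alpha\beta})$ representing $c_1(L)$ comes from a class $a\in\coH[1]{\X^\circ,\coN{\cF}}$. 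Taking the $q$-th power via the cup product $\coH[1]{\coN{\cF}}^{\otimes q}\to \coH[q]{\wedge^q\coN{\cF}}=\coH[q]{\det\coN{\cF}}$ gives a lift $a^q$ of $c_1(\cN{\cF})^q$. This is compatible with the natural map because $\wedge^q\coN{\cF}\to\forms[q]{\X}$ is induced by $\coN{\cF}\hookrightarrow\forms[1]{\X}$, and cup product commutes with these maps.

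\textbf{Step 2: extension to $\X$.} The sheaf $\det\coN{\cF}$ is a reflexive rank-one sheaf on a manifold, hence invertible, and $\forms[q]{\X}$ is locally free. Let $Z=\sing(\cF)$, with $\codim Z\ge q+2$. For a locally free sheaf $E$, the local cohomology sheaves $\mathcal{H}^i_Z(E)$ vanish for $i<\codim Z$ (Scheja's theorem, or depth/Grothendieck vanishing in the analytic setting). The local-to-global spectral sequence then gives $\coH[i]{\X,E}\to\coH[i]{\X^\circ,E}$ bijective for $i\le \codim Z-2$, and injective for $i=\codim Z-1$. With $i=q\le\codim Z-2$, both restriction maps for $E=\det\coN{\cF}$ and $E=\forms[q]{\X}$ are isomorphisms. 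By naturality of restriction, the lift $a^q$ descends to a class in $\coH[q]{\X,\det\coN{\cF}}$. Its image in $\coH[q]{\X,\forms[q]{\X}}$ restricts to $c_1(\cN{\cF})^q|_{\X^\circ}$, so it equals $c_1(\cN{\cF})^q$ by injectivity.

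The main point to check carefully is the codimension bookkeeping in Step 2: we need the Scheja-type extension for holomorphic cohomology in degree exactly $q$, which requires $\codim Z\ge q+2$. A secondary point is that the class used in Step 1 must be $c_1(\det\cN{\cF})$ and not $c_1$ of some other determinant; this holds since $c_1(\cN{\cF})=c_1(\det \cN{\cF})$, and the reflexive extension agrees on $\X^\circ$.
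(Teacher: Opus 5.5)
Your proposal is correct and takes the same route the paper indicates: the paper cites Druel's Lemma 3.6 rather than proving it, but explains it exactly this way. The flat Bott partial connection on the regular locus lifts $c_1(\cN{\cF})^q$ to $\coH[q]{\X^\circ,\det\coN{\cF}}$, and this lift extends to $\X$ because degree-$q$ cohomology of locally free sheaves is unchanged by removing an analytic subset of codimension at least $q+2$; your Čech argument with leafwise-constant transition functions and your Scheja-type codimension bookkeeping fill in both steps correctly.
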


When applied to the case in which the codimension $q$ is equal to one, the lemma immediately gives the following numerical constraints on curves tangent to $\cF$:
\begin{corollary}\label{lemma:connection-normal-along-curve}
    Let $\cF$ be a codimension-one foliation on $\X$ whose singular locus has codimension at least three, and let 
    $\phi : \C \to \X$ be a compact curve tangent to $\cF$.  Then $\phi^*c_1(\cN{\cF}) = 0 \in \coH[1]{\C,\forms[1]{\C}}$.  In particular, we have the vanishing of the intersection number
    \[
    \cN{\cF} \cdot \C = 0
    \]
    of $\C$ and $\cN{\cF}$.
\end{corollary}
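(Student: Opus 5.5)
We apply the preceding lemma with $q=1$. Its hypothesis is that the singular locus has codimension at least $q+2=3$, which is exactly what we assume here. The lemma says that $c_1(\cN{\cF})\in\coH[1]{\X,\forms[1]{\X}}$ is the image of a class $\beta\in\coH[1]{\X,\coN{\cF}}$ under the map induced by the inclusion $\coN{\cF}\hookrightarrow\forms[1]{\X}$. Since $\det\coN{\cF}=\coN{\cF}$ in codimension one, no further identification is needed.

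Next we pull back along $\phi$. Pullback of $c_1$ is natural, so $\phi^*c_1(\cN{\cF})=c_1(\phi^*\cN{\cF})\in\coH[1]{\C,\forms[1]{\C}}$. Its image in $\coH[1]{\C,\forms[1]{\C}}$ is obtained by pulling back in $\coH[1]{\C,\phi^*\forms[1]{\X}}$ and then applying the differential $\phi^*\forms[1]{\X}\to\forms[1]{\C}$. By functoriality of cohomology, this class is therefore the image of $\phi^*\beta\in\coH[1]{\C,\phi^*\coN{\cF}}$ under the map induced by the composition
\[
\phi^*\coN{\cF}\to\phi^*\forms[1]{\X}\to\forms[1]{\C}.
\]
Tangency of $\phi$ to $\cF$ says precisely that this composition is the zero morphism of sheaves. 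The induced map on $\coH[1]{}$ is then zero, so $\phi^*c_1(\cN{\cF})=0$.

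The one point needing care is the compatibility of Chern classes with pullback through the sheaf of $1$-forms. The class $c_1(L)\in\coH[1]{\X,\forms[1]{\X}}$ is the image of the transition cocycle under $d\log:\mathcal O_\X^*\to\forms[1]{\X}$. Since $d\log$ commutes with pullback, the image of $\phi^*c_1(L)$ in $\coH[1]{\C,\forms[1]{\C}}$ is $c_1(\phi^*L)$. If $\C$ is singular, or $\phi$ is not an embedding, we should first replace $\C$ by its normalization $\tilde\C\to\C$. The composed map is still tangent, since $\coN{\cF}\to\forms[1]{\tilde \C}$ factors through $\forms[1]{\C}$, and the degree is unchanged under normalization.

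Finally, on a compact curve the trace isomorphism $\coH[1]{\C,\forms[1]{\C}}\cong\mathbb C$ sends $c_1(\phi^*\cN{\cF})$ to $\deg\phi^*\cN{\cF}$, up to a nonzero constant, and this degree is $\cN{\cF}\cdot\C$. Hence $\cN{\cF}\cdot\C=0$.
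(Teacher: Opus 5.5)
Your proposal is correct and follows the paper's route exactly: the paper applies the preceding lemma with $q=1$ to lift $c_1(\cN{\cF})$ to a class in $\coH[1]{\X,\coN{\cF}}$, and then observes that tangency kills the pullback of this class to $\C$, which is what you spell out via functoriality. Your extra remarks on the $d\log$ description of $c_1$, on normalization, and on the trace identification with the degree are details the paper leaves implicit, and they are fine.
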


On the other hand, for rational curves that are not tangent to $\cF$, we have the following:

\begin{lemma}\label{lemma:noninvariant-rational-curve}
Let $\cF$ be a codimension-one holomorphic foliation, and let $\phi : \C \to \X$ be a rational curve that is not tangent to $\cF$. Then we have  
    \[
    \cN{\cF} \cdot \C \ge 2.
    \]
\end{lemma}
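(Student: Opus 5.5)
Pull everything back to $\C\cong\PP^1$ (replacing the rational curve by its normalization) and compare the conormal line bundle with the cotangent bundle of $\PP^1$. The composition
\[
\phi^*\coN{\cF} \longrightarrow \phi^*\forms[1]{\X} \longrightarrow \forms[1]{\C}
\]
is a morphism of invertible sheaves on $\C$. By hypothesis it is not identically zero, since $\phi$ is not tangent to $\cF$. A nonzero morphism between line bundles on an integral curve is injective, with cokernel supported on an effective divisor. Hence it exhibits $\phi^*\coN{\cF}$ as isomorphic to $\forms[1]{\C}(-D)$ for some effective divisor $D\ge 0$.

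Taking degrees gives
\[
-\cN{\cF}\cdot\C = \deg\phi^*\coN{\cF} = \deg\forms[1]{\C} - \deg D \le -2,
\]
because $\deg \forms[1]{\PP^1} = -2$. This is exactly $\cN{\cF}\cdot\C\ge 2$.

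One point needs care. The intersection number $\cN{\cF}\cdot\C$ is interpreted as $\deg\phi^*\cN{\cF} = -\deg\phi^*\coN{\cF}$. Since $\coN{\cF}$ is invertible in codimension one, this is legitimate: $\cN{\cF}=(\coN{\cF})^\vee$ is a line bundle, and pullback commutes with dualizing for line bundles.

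The only potential obstacle is when $\C$ is singular, or $\phi$ is not an embedding, e.g.\ the image of $\phi$ passes through $\sing(\cF)$ or $\phi$ is a multiple cover. Working on the normalization $\PP^1$ handles all of these uniformly. Non-tangency is a generic condition on the smooth source, so the map $\phi^*\coN{\cF}\to\forms[1]{\PP^1}$ is still a nonzero map of line bundles. Points where the image meets $\sing(\cF)$ only contribute to $D$ and therefore only improve the inequality.
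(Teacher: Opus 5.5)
Your proof is correct and is essentially the paper's argument: pass to the normalization $\PP^1$, observe that non-tangency makes $\phi^*\coN{\cF}\to\forms[1]{\C}$ a nonzero map of line bundles, and compare degrees using $\deg\forms[1]{\PP^1}=-2$ (the paper dualizes this to a nonzero map $\cT{\C}\to\phi^*\cN{\cF}$ with $\deg\cT{\C}=2$). One small point: the right reason $\coN{\cF}$ is a line bundle is not that it is invertible in codimension one, but that a saturated rank-one subsheaf of $\forms[1]{\X}$ on a manifold is reflexive, hence invertible everywhere.
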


\begin{proof}
    By passing to the normalization of $\C$, we may assume without loss of generality that $\C$ is smooth.  By assumption, the composition $\phi^*\coN{\cF} \to \phi^*\forms[1]{\X}\to\forms[1]{\C}$ is nonzero.  Dually, we have a nonzero map $\cT{\C}\to \phi^*\cN{\cF}$, so the result follows from the definition $\cN{\cF}  \cdot \C:= \deg(\phi^*\cN{\cF})$ and the fact that $\deg(\cT{\C}) = 2$.
\end{proof}

\subsection{Foliations on weak Fano varieties}

Recall that a \defn{weak Fano manifold} is a complex projective manifold $\X$ whose anticanonical class $-K_\X$ is big and nef.  In \cite{DruelWeakFano}, the first author showed that regular foliations of arbitrary codimension on weak Fano manifolds are fibrations, i.e.~their generic leaves are the fibres of a morphism.  In this subsection, we show that the same conclusion holds for codimension-one foliations on weak Fano manifolds whose singular locus has sufficiently high codimension, namely we prove the following result (stated as \autoref{thm:fibration-intro} in the introduction).

\begin{theorem}\label{theorem:fibration}
    If $\X$ is a weak Fano projective manifold and $\cF$ is a codimension-one foliation on $\X$ with $\codim\,\sing(\cF)\ge 3$, then $\cF$ is defined by a morphism $f:\X\to \mathbb P^1$ with irreducible fibres.
\end{theorem}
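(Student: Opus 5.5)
We want to show that the foliation is algebraically integrable with compact leaves forming a pencil. We then identify the pencil as a morphism to $\PP^1$.

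\textbf{Step 1: $\cN{\cF}$ is nef and not numerically trivial.} Since $\X$ is weak Fano, it is rationally connected (Zhang), so $\X$ is covered by rational curves; moreover, by bend-and-break / the cone theorem for weak Fano manifolds, the Mori cone $\overline{NE}(\X)$ is rational polyhedral and generated by classes of rational curves (using that $-K_\X$ big and nef makes $\X$ a log Fano pair $(\X,\Delta)$ for a suitable small boundary $\Delta$). For every rational curve $\C$, either $\C$ is tangent to $\cF$, in which case \autoref{lemma:connection-normal-along-curve} gives $\cN{\cF}\cdot\C=0$, or it is not, in which case \autoref{lemma:noninvariant-rational-curve} gives $\cN{\cF}\cdot\C\ge 2$. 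Hence $\cN{\cF}$ is nonnegative on generators of the cone, so $\cN{\cF}$ is nef. Since $\X$ is rationally connected, through a general point there pass rational curves in every direction (very free curves), so some rational curve is not tangent to $\cF$; thus $\cN{\cF}\not\equiv 0$.

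\textbf{Step 2: $\cN{\cF}^2\equiv 0$ and semiampleness.} By the lemma of \cite{DruelCodimensiontwo} with $q=1$, $c_1(\cN{\cF})$ lifts to $\coH[1]{\X,\coN{\cF}}$. Then $c_1(\cN{\cF})^2$ is the image of a class in $\coH[2]{\X,(\coN{\cF})^{\otimes 2}}\to \coH[2]{\X,\forms[2]{\X}}$ factoring through $\coN{\cF}\wedge\coN{\cF}=0$, so $c_1(\cN{\cF})^2=0$ (the same argument used on a general surface section gives $\cN{\cF}^2\cdot H^{\dim\X-2}=0$). By the basepoint-free theorem on the log Fano pair, a nef line bundle is semiample, so $\cN{\cF}$ (or a multiple) defines a morphism $g:\X\to Z$ with connected fibres, and $\nu(\cN{\cF})=1$ forces $\dim Z=1$. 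As $\X$ is rationally connected, $Z\cong\PP^1$.

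\textbf{Step 3: identify $\cF$ with the fibration.} A curve contracted by $g$ has $\cN{\cF}\cdot\C=0$; for rational curves this forces tangency by \autoref{lemma:noninvariant-rational-curve}. Fibres of $g$ are rationally chain connected (general fibre of a morphism from a weak Fano; or use that the extremal contraction's fibres are covered by rational curves contracted by $g$), so general fibres are tangent to $\cF$, hence $\cF$ is the foliation induced by $g$. Irreducibility of fibres: if a fibre had a non-reduced or reducible structure, the conormal bundle $\coN{\cF}$ would differ from $g^*\mathcal O(-2)$ by an effective divisor supported on fibre components, $\cN{\cF}=g^*\mathcal O(2)-\sum a_iF_i$; a reducible fibre component $F_i$ is then not numerically a multiple of $\cN{\cF}$, contradicting that $\cN{\cF}$ is (up to scaling) the pullback from $\PP^1$ generating the contraction, or alternatively multiple fibres produce codimension-two singularities of $\cF$ at the intersections of components, contradicting $\codim\sing(\cF)\ge3$. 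The main obstacle is this last step together with rigorously justifying the cone/semiampleness statements on weak Fano manifolds; I would try first the codimension argument: two components of a fibre meet in codimension two, and along that intersection the foliation defined by $dg$ must be singular, which is excluded.
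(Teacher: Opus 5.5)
Your proposal is correct and has the same skeleton as the paper's proof. Nefness of $\cN{\cF}$ comes from the cone theorem for a log Fano pair and the tangent/non-tangent dichotomy for rational curves. Semiampleness comes from the basepoint-free theorem, and $c_1(\cN{\cF})^2=0$ forces a one-dimensional base. Irreducibility of fibres comes from $\codim\sing(\cF)\ge 3$.

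Your local variations are valid:
\begin{itemize}
\item You get $c_1(\cN{\cF})^2=0$ from the lift of $c_1(\cN{\cF})$ to $\coH[1]{\X,\coN{\cF}}$ together with the vanishing of $\coN{\cF}\otimes\coN{\cF}\to\forms[2]{\X}$. This is a self-contained Bott-type argument; the paper quotes Baum--Bott instead.
\item You get a positive-dimensional base from a very free curve not tangent to $\cF$. The paper deduces this only after proving that the fibres are tangent to $\cF$.
\end{itemize}

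Two sub-steps need tightening.

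First, tangency of a general fibre $\F$. You know that every rational curve in $\F$ is tangent to $\cF$. Passing from this to $\cT{\F}\subseteq \cT{\cF}|_{\F}$ needs an argument, and covering of $\F$ by such curves is not enough: a variety can be swept out by curves lying in the leaves of a codimension-one foliation on it. In particular, your alternative via fibres of an extremal contraction being covered by contracted rational curves does not suffice. What works is that $\F$ is smooth and rationally connected. Then at a general point, the tangent directions of very free curves through it fill an open subset of the projectivized tangent space, so they span the tangent space of $\F$. The paper avoids the issue entirely: $\cN{\cF}|_{\F}$ is torsion, hence trivial because the weak Fano manifold $\F$ is simply connected. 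So the defining twisted $1$-form of $\cF$ restricts to an element of $\coH[0]{\F,\forms[1]{\F}}=0$.

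Second, of your two irreducibility arguments only the codimension one works, and it is the paper's.
\begin{itemize}
\item The divisor-formula argument cannot succeed. The saturation correction is $\cN{\cF}=g^*\cO{\PP^1}(2)-\sum(\mu_i-1)E_i$, summed over fibre components $E_i$ of multiplicity $\mu_i$. So a reduced reducible fibre contributes nothing, and no numerical contradiction arises. For example, the blow-up of $\PP^1\times\PP^1$ at a point, fibred over $\PP^1$ by a projection, has such a fibre and $\cN{\cF}=g^*\cO{\PP^1}(2)$. It is excluded only because the singular point of its foliation has codimension two.
\item For the codimension argument, say explicitly that the fibres of $g$ are connected, so two components of a reducible fibre meet along a subset of codimension two in $\X$. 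Also note that near a regular point of $\cF$ the reduced fibre is a single smooth local leaf. Hence those intersection points lie in $\sing(\cF)$.
\end{itemize}
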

We will construct the desired morphism by contracting suitable curves in $\X$.  To this end, we require a preparatory lemma.  Let $N_1(\X)$ be the $\RR$-vector space of one-cycles in $\X$ modulo numerical equivalence, and recall that the Mori cone $\NE(\X) \subset N_1(\X)$ is the set of non-negative linear combinations of classes of reduced, irreducible, proper curves.  The following fact about weak Fano manifolds is standard; we include a proof for lack of a reference.
\begin{lemma}\label{lem:mori-cone}
    If $\X$ is a weak Fano manifold, then $\NE(\X)$ is generated by classes of rational curves.
\end{lemma}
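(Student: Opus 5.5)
The plan is to combine the Cone Theorem for klt pairs with the basepoint-free theorem, exploiting that $-K_\X$ is big and nef. Since $-K_\X$ is big, Kodaira's lemma lets us write $-K_\X \sim_{\QQ} A + E$ with $A$ ample and $E$ an effective $\QQ$-divisor. For small rational $\epsilon>0$, the pair $(\X,\Delta)$ with $\Delta=\epsilon E$ is klt, because $\X$ is smooth and $\epsilon E$ has small coefficients. We then have
\[
-(K_\X+\Delta) \sim_{\QQ} (1-\epsilon)(-K_\X) + \epsilon A .
\]
This is ample, since it is a nef class plus a positive multiple of an ample class. So $(\X,\Delta)$ is a log Fano pair.

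Next we apply the Cone Theorem for klt pairs (Kawamata, Shokurov; see Kollár--Mori, Theorem 3.7):
\[
\NE(\X) = \NE(\X)_{K_\X+\Delta\ge 0} + \sum_i \RR_{\ge 0}[\C_i],
\]
where the $\C_i$ are rational curves with $0<-(K_\X+\Delta)\cdot \C_i \le 2\dim \X$. Because $-(K_\X+\Delta)$ is ample, it is positive on $\NE(\X)\setminus\{0\}$ (Kleiman's criterion). Hence the $(K_\X+\Delta)$-nonnegative part is $\{0\}$. The boundedness of the degrees against this ample class, together with the discreteness of extremal rays, shows that there are only finitely many such rays. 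Therefore $\NE(\X)$ is a rational polyhedral cone spanned by finitely many classes of rational curves. In particular it is closed, so $\NE(\X)=\overline{\NE}(\X)$, and it is generated by rational curves.

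The one point needing care is the distinction between $\NE$ and its closure. The Cone Theorem is stated for $\overline{\NE}$, and the claim concerns $\NE$. This is resolved by the finiteness of extremal rays in the log Fano case: a finitely generated cone spanned by classes of actual curves is automatically closed and contained in $\NE$. As an alternative, one could cite the fact that weak Fano manifolds are Mori dream spaces (Birkar--Cascini--Hacon--McKernan), but the log Fano reduction above is more elementary. Everything else is a formal consequence of the perturbation $\Delta=\epsilon E$.
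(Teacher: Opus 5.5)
Your proof is correct and follows the paper's argument. You use Kodaira's lemma to perturb $-K_\X$ into a klt log Fano pair $(\X,\varepsilon E)$ and then apply the cone theorem for klt pairs. Your additional step, showing that there are only finitely many extremal rays and hence $\NE(\X)=\overline{\NE}(\X)$, spells out a point the paper leaves implicit.
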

\begin{proof}
By Kodaira's Lemma, there exists an effective $\mathbb{Q}$-divisor $E$ such that $-K_\X-\varepsilon E$ is ample for any $\varepsilon \in \left ] 0,1 \right ]$. If $\varepsilon$ is a small enough positive rational number, then the pair $(\X,\varepsilon E)$ has klt singularities and $-(K_\X+\varepsilon E)$ is ample. In other words, $(\X,\varepsilon E)$ is log Fano. The result now follows from the cone theorem for klt pairs (see \cite[Theorem 3.7]{KollarMori}).
\end{proof}

\begin{proof}[Proof of \autoref{theorem:fibration}]
    By \autoref{lem:mori-cone}, the Mori cone $\NE(\X)$ is generated by classes of rational curves.     Let $\C\subset \X$ be a rational curve. If $\C$ is tangent to $\cF$, then  \autoref{lemma:connection-normal-along-curve} gives
    $\cN{\cF}\cdot \C=0$. If $\C$ is not tangent, then \autoref{lemma:noninvariant-rational-curve} gives
    $\cN{\cF}\cdot \C\ge 2$. Hence $\cN{\cF}\cdot \C\ge 0$ for every rational curve $\C$, and therefore for every curve class in $\NE(\X)$.
    Thus $\cN{\cF}$ is nef.

    Since $\cN{\cF}$ is nef and $-K_{\X}$ is nef and big, for any positive integer $a$ the divisor
    $a\cN{\cF}-K_{\X}=a\cN{\cF}+(-K_{\X})$ is nef and big. By the base point free theorem \cite[Theorem 3.3]{KollarMori}, some multiple of $\cN{\cF}$ is globally generated.
    Hence there exists a surjective morphism with connected fibres
    \[
        f:\X \to \Y
    \]
    onto a normal projective variety $\Y$, and an ample line bundle $\cA$ on $\Y$, such that
    \[
        \cN{\cF}^{\otimes m} \simeq f^*\cA
    \]
    for some integer $m \ge 1$.
    By the Baum--Bott residue theorem \cite{BaumBott}, the cohomology class $c_1(\cN{\cF})^2$ is represented by a cycle supported on the codimension-two components of $\sing(\cF)$. Since $\codim \sing(\cF)\ge 3$, there are no such components and therefore
    \[
        c_1(\cN{\cF})^2 = 0.
    \]
    On the other hand, if $\dim \Y\ge 2$ then, since $\cA$ is ample, one has $c_1(\cA)^2\neq 0$, and hence
    \[
        c_1(\cN{\cF})^2 = \frac{1}{m^2} f^*c_1(\cA)^2 \neq 0,
    \]
    a contradiction. Thus 
    \[
    \dim \Y \le 1.
    \]
    
    Let $\omega\in \coH[0]{\X,\forms[1]{\X}\otimes \cN{\cF}}$ be a twisted $1$-form defining $\cF$. Let $\F$ be a general fibre of $f$. By adjunction, $K_\F = K_{\X}|_\F$, and hence $-K_\F = (-K_{\X})|_\F$ is nef. Since $-K_{\X}$ is big, its restriction to a general fibre is big as well, so $-K_\F$ is nef and big. In other words, $\F$ is a weak Fano variety. Notice that the line bundle $\cN{\cF}|_\F$ is torsion since $\cN{\cF}^{\otimes m}\simeq f^*\cA$. This implies $\cN{\cF}|_\F\simeq \cO{\F}$ since weak Fano manifolds are simply connected by \cite{zhang_rcc}.
    Moreover $\omega|_{\F}=0 \in \coH[0]{\F,\forms[1]{\F}}$, and $\F$ is tangent to $\cF$.  
    As a consequence, $\dim \F \le \dim \X-1$, so $\dim \Y\ge 1$.
    Combining this with $\dim \Y\le 1$, we see that $\Y$ is a normal projective curve, hence smooth. This shows that $\cF$ is defined by the morphism $f:\X\to \Y$. 
    
    Because $f$ has connected fibres, the pull-back map $\coH[1]{\Y,\cO{\Y}}\to \coH[1]{\X,\cO{\X}}$ is injective, hence $\coh[1]{\Y,\cO{\Y}}=0$ and therefore $\Y\simeq \PP^1$.
    
    Finally, we note that since the fibres of $f$ are tangent to $\cF$, their singular loci are contained in $\sing(\cF)$.  Since $\codim \sing(\cF) \ge 3$, the fibres must therefore be irreducible.
\end{proof}

\section{Pfaff fields on compact K\"ahler manifolds}\label{sec:pfaff}

In this section, we discuss an analogue, for compact K\"ahler manifolds, of some of Esteves--Kleiman's work~\cite{EstevesKleiman}  on Pfaff fields on algebraic varieties.

\subsection{Pfaff fields}
Let $\X$ be a complex manifold and let $p$ be an integer such that $0 < p < \dim \X$.  Following Esteves--Kleiman~\cite{EstevesKleiman}, a \defn{Pfaff field of degree $p$ on $\X$} is a pair $(\cL,\eta)$ where $\cL$ is an invertible sheaf, and
\[
\eta : \forms[p]{\X} \to \cL
\]
is an $\cO{\X}$-linear map.  Equivalently, since $\X$ is assumed smooth, we may view $\eta$ as a section
\[
\eta \in \coH[0]{\X,\wedge^p\cT{\X}\otimes\cL} \cong \coH[0]{\X,\forms[n-p]{\X}\otimes \omega_\X^\vee\otimes \cL}
\]
where $\omega_\X$ denotes the canonical line bundle of $\X$ and $n=\dim \X$.  Note that $\eta$ defines a distribution
\[
\ker \eta \subset \cT{\X}
\]
as the set of vectors $v \in \cT{\X}$ such that $\eta \wedge v = 0$.  The \defn{singular locus of $\eta$} is the closed analytic subspace whose defining ideal is the image of the natural map $\forms[p]{\X}\otimes\cL^\vee \to \cO{\X}$ induced by $\eta$.

An analytic subspace $\Y\subset \X$ is \defn{$\eta$-invariant} if the natural map $\eta|_\Y : \forms[p]{\X}|_\Y \to \cL|_\Y$ factors through the quotient $\forms[p]{\X}|_\Y \to \forms[p]{\Y}$, i.e.~we have a (necessarily unique) dashed arrow $\eta_\Y$ filling in the following commutative diagram:
\[
\begin{tikzcd}
    \forms[p]{\X}\ar[r,"\eta"]\ar[d] & \cL \ar[d] \\
    \forms[p]{\Y} \ar[r,dashed,"\eta_\Y"] & \cL|_\Y.
\end{tikzcd}
\]

\subsection{The cohomological constraint of Esteves--Kleiman}

The key notion here is that of a closed strongly directed positive current. This is a slight generalization of \cite[Section~3]{numericallyflat}.

\begin{definition}
Let $\X$ be a complex manifold of dimension $n$, and let $\alpha\in \coH[0]{\X,\forms[q]{\X}\otimes \cL}$ be a twisted $q$-form. We say that a closed positive current $T$ of bidegree $(q,q)$ on $\X$ is \emph{strongly directed} (by $\alpha$) if one can locally write
		\[
		T={ ( \sqrt{-1})}^{q^2} a\ \omega\wedge \overline{\omega},
		\]
where $a$ is a positive locally finite Borel measure and $\omega$ is a local generator of $\alpha(\cL^\vee) \subseteq \forms[q]{\X}$ on some open analytic subset $\U$ of $\X$.
\end{definition}

Let $\Z$ be the zero locus of $\alpha$. Note that $\bar{\partial}(a \bar{\omega}) = 0$ on $\U \setminus \Z$ since $T$ is closed by assumption. We will need the following observation.

\begin{lemma}\label{lemma:closed}
We have $\bar{\partial}(a \bar{\omega}) = 0$ on $\U$.
\end{lemma}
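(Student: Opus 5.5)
The plan is to show that the $(0,1)$-current $S:=\bar\partial(a\bar\omega)$, a current of bidegree $(0,q+1)$ on $\U$, vanishes on all of $\U$. We already know that $S$ is supported on $\Z\cap\U$. We use two ingredients: the positivity and closedness of $T$, and the fact that $\Z$ is analytic of positive codimension. The argument is local, so we may shrink $\U$ to a small polydisc on which $\omega$ is a holomorphic $q$-form with zero set $\Z$.

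First we record that $a\bar\omega$ has measure coefficients. Since $T$ is positive, its trace measure $\|T\| = a\,|\omega|^2\,dV$ (up to constants) is locally finite. We would like $a|\omega|$ to be locally integrable as well; at first this seems to require $\int a<\infty$ near $\Z$, which could fail. To avoid this, we replace $a$ by $a' := a|\omega|^2$ (locally finite) and write $a\bar\omega = a'\,\bar\omega/|\omega|^2$. This is awkward near $\Z$.

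A cleaner route is to test $S$ against forms. For a smooth compactly supported test form $\varphi$ of bidegree $(n,n-q-1)$, we want $\int a\bar\omega\wedge\bar\partial\varphi=0$. To get this, we choose cutoff functions $\chi_\epsilon$ that equal $1$ away from an $\epsilon$-neighbourhood of $\Z$ and vanish near $\Z$, built as $\chi_\epsilon=\rho(\log\log(1/|\omega|)/\log\log(1/\epsilon))$ or a similar standard El Mir/Skoda-type cutoff. Closedness off $\Z$ gives
\[
\int a\bar\omega\wedge\bar\partial(\chi_\epsilon\varphi)=0,
\]
so it suffices to show that the error term $\int a\bar\omega\wedge\bar\partial\chi_\epsilon\wedge\varphi$ tends to $0$. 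This is where positivity of $T$ enters. Rather than using $\bar\omega$ alone, we write $\varphi$ locally as $\omega\wedge\psi$ plus a term annihilated by $\bar\omega$ in the relevant pairing. Concretely, the pairing $a\bar\omega\wedge(\cdot)$ only sees the component of the test form "dual" to $\omega$. After a Cauchy--Schwarz estimate, the error is then controlled by
\[
\Bigl(\int a|\omega|^2\,|\partial\chi_\epsilon|^2\,\cdots\Bigr)^{1/2}.
\]
The cutoff is chosen so that $\int \|T\|\,|\partial\chi_\epsilon|^2\to 0$, which holds because $\|T\|$ is locally finite and log-log cutoffs have vanishing Dirichlet energy in codimension $\ge 1$ against finite measures that are locally bounded by the mass of $T$.

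Alternatively, we can use the support theorem for flat currents. By the above, $S$ is a current of order zero (its coefficients are measures) supported on $\Z$. By Federer's support theorem, a normal current of bidimension $(n,n-q-1)$, i.e.\ real dimension $2n-q-1>2n-2\ge\dim_\RR\Z$, supported on $\Z$ must vanish, since $q\ge1$ and $\dim_\RR \Z\le 2n-2$. It would then remain to check that $a\bar\omega$ and $S$ are normal (or flat), which again reduces to $a\bar\omega$ having locally finite mass near $\Z$.

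The main obstacle is exactly this mass control of $a\bar\omega$ near $\Z$, where $|\omega|\to0$ means $\|T\|$ does not directly bound $a|\omega|$. I would first try to show that $a$ itself has locally finite mass, using closedness of $T$ and a Skoda--El Mir type extension (the trivial extension of $T|_{\U\setminus\Z}$ equals $T$ and charges no mass on $\Z$). If that fails, I would fall back on the cutoff argument above, where only $a|\omega|^2$ is needed.
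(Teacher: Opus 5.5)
Your overall strategy (cut off near $\Z$ by a function of $\|\omega\|$ and use closedness of $a\bar\omega$ off $\Z$) matches the paper's. However, the step that carries the whole proof is never established: you need the error term $\int a\,\bar\omega\wedge\bar\partial\chi_\epsilon\wedge\varphi$ to tend to $0$, and the estimates you propose for it do not work. The root cause is that you treat local finiteness of $a$ as the main obstacle. It is in fact a hypothesis: in the definition of a strongly directed current, $a$ is a positive \emph{locally finite} Borel measure on $\U$. Without this, $a\bar\omega$ need not even be a current on $\U$.

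Granting it, the missing observation is pointwise. Because the cutoff is a function of $\|\omega\|$, the factor $\|\omega\|$ carried by $a\bar\omega$ cancels the blow-up of $\bar\partial\chi_\epsilon$. The paper takes $\Phi_\varepsilon=\varphi_\varepsilon(\|\omega\|)$ with $\|\varphi'_\varepsilon\|_\infty\le C/\varepsilon$. Then $\|\bar\omega\wedge\bar\partial\Phi_\varepsilon\wedge\beta\|=O(1)$, since the relevant supports lie in $\{\|\omega\|\le\varepsilon\}$. It also replaces $a$ by $\chi_{\U\setminus\Z}\,a$, which is harmless because $\bar\omega$ vanishes on $\Z$. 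After that, both terms of $\langle a\bar\omega,\bar\partial\Phi_\varepsilon\wedge\beta+\Phi_\varepsilon\bar\partial\beta\rangle$ are bounded by a constant times $a(\mathrm{supp}(\Phi_\varepsilon)\cap\mathrm{supp}(\beta))$, which tends to $a(\Z\cap\mathrm{supp}(\beta))=0$. Your log-log cutoffs would serve equally well, since $\|\omega\|\,|\bar\partial\chi_\epsilon|\le C/\log\log(1/\epsilon)$ on their support, but this is not the estimate you make.

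The estimates you do propose fail, for three reasons.
\begin{itemize}
\item No bound in terms of $\|T\|\approx a\|\omega\|^2$ alone can work. The integrand of the error term has size $a\|\omega\|\,|\bar\partial\chi_\epsilon|=\|T\|\cdot|\bar\partial\chi_\epsilon|/\|\omega\|$, and $|\bar\partial\chi_\epsilon|/\|\omega\|$ is unbounded on the support of $\bar\partial\chi_\epsilon$ for any cutoff.
\item Your Cauchy--Schwarz splitting leaves a second factor $\int a|\varphi|^2$, which again requires $a$ to be finite.
\item Vanishing Dirichlet energy of log-log cutoffs is a capacity statement relative to Lebesgue measure. It does not hold against an arbitrary finite measure such as $\|T\|$.
\end{itemize}

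The Federer alternative also fails.
\begin{itemize}
\item It is wrong on dimension grounds: $2n-q-1>2n-2$ holds only when $q=0$. So for $q\ge1$ the current $S$ has real dimension at most $2n-2$, and $\Z$ (which can even be a divisor) need not be $\mathcal{H}^{2n-q-1}$-null.
\item Locally finite mass does not imply flatness. A Dirac mass regarded as a top-dimensional current has finite mass but is not flat, so that reduction is invalid too.
\end{itemize}
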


\begin{proof}
The statement is local on $\U$, so we may shrink $\U$ and assume that $\U \cong \mathbb{D}^n$ is a polydisk, with coordinates $(z_1,\ldots,z_n)$. Set $p:= n -q$ and let $\eta$ be a $(p,q)$-form on $\U$, which we write as
\[
\eta = \sum_{I,J} f_{IJ} \, dz_I \wedge d\bar{z}_J
\]
for some $C^\infty$ functions $f_{IJ}$.  Let $\|\eta\| = \left( \sum_{|I|=p,|J|=q} |f_{IJ}|^2 \right)^{1/2}$ be the pointwise norm of $\eta$.
 
We likewise write 
\[
\omega = \sum_{|I|=q} \omega_I \, dz_I,
\]
so that
\[ 
\|\omega\| = \left( \sum_I |\omega_I|^2 \right)^{1/2} = \|\bar{\omega}\|. 
\]

We may also assume without loss of generality that $a(\Z \cap \U) = 0$ since $a \bar{\omega} = \chi_{\U \setminus \Z} a \bar{\omega}$. 

Let $\varphi_\epsilon \in C^\infty_c(\RR)$ for $\varepsilon > 0$ be a collection of bump functions that are equal to one in a neighbourhood of zero and such that $\textup{supp}(\varphi_\varepsilon) \subseteq [-\varepsilon,\varepsilon]$.  By \cite[Lemma~2.2]{HarveyPolking} (attributed there to Bochner), we can assume further that there is a constant $C>0$ such that
\begin{align}
\| \varphi'_\varepsilon \|_\infty \le \frac{C}{\varepsilon} \qquad \textrm{and} \qquad \| \varphi_\varepsilon \|_\infty \le C \label{eq:bochner}
\end{align}
for all $\varepsilon > 0$.

Let $\Phi_\varepsilon \colon \U \to \mathbb{R}$ be defined by $\Phi_\varepsilon = \varphi_\varepsilon(\|\omega\|)$, so that $\Phi_\varepsilon \in C^\infty(\U)$.
Let $\beta$ be a compactly supported $(n,p-1)$-form on $\U$.  Then
\[
\begin{array}{ccll}
\langle \bar{\partial}(a \bar{\omega}), \beta \rangle & = & \langle \bar{\partial}(a \bar{\omega}), \Phi_\varepsilon \beta \rangle & \textup{since } \textup{supp}(\bar{\partial}(a \bar{\omega})) \subseteq \Z \cap \U\\
& = & (-1)^{q+1} \langle a \bar{\omega}, (\bar{\partial} \Phi_\varepsilon) \wedge \beta + \Phi_\varepsilon \bar{\partial} \beta \rangle. & \\
\end{array}
\]

Notice that 
\[
a(\text{supp}(\Phi_\varepsilon)) \underset{\varepsilon \to 0}{\longrightarrow} a(\Z \cap \U) = 0
\] 
by construction, and that 
\[\| \bar{\omega} \wedge \bar{\partial} \Phi_\varepsilon \wedge \beta \| = O(1)\] 
by \eqref{eq:bochner}. This immediately implies 
\[
\langle \bar{\partial}(a \bar{\omega}), \beta \rangle = 0,\]
proving the lemma.
\end{proof}

The following result extends \cite[Lemma~3.11]{numericallyflat} to our current setting.

\begin{lemma}\label{lemma:pfaff-kahler}
Let $\X$ be a compact K\"ahler manifold of dimension $n$, and let $\alpha\in \coH[0]{\X,\forms[q]{\X}\otimes \cL}$ be a twisted $q$-form. Let $\eta \colon \forms[p]{\X} \longrightarrow \omega_\X\otimes \cL$ be the Pfaff field induced by $\alpha$, where $p:=n-q$. Let $\X^\circ$ be an open subset whose complement in $\X$ is a closed analytic subset $\Si$ of codimension at least $q+2$. Suppose that there exists a nonzero closed positive current $T^\circ$ on $\X^\circ$ strongly directed by $\alpha|_{\X^\circ}$. Then the map
    \[
        \coH[p]{\eta} : \coH[p]{\X,\forms[p]{\X}} \longrightarrow  \coH[p]{\X, \omega_\X\otimes \cL } 
    \]
is nonzero.
\end{lemma}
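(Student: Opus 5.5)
We argue by duality: a nonzero class in the image of $\coH[p]{\eta}$ will be produced by testing against the current. Locally on $\U$, write $\alpha = \omega\otimes e$ with $e$ a local frame of $\cL$ and $\omega$ a local generator of $\alpha(\cL^\vee)$. Then $\eta$ sends a $p$-form $\gamma$ to $(\gamma\wedge\omega)\otimes e$, viewed as a section of $\omega_\X\otimes\cL$. Since $T^\circ$ is strongly directed, $T^\circ = (\sqrt{-1})^{q^2} a\,\omega\wedge\bar\omega$. The idea is to use $a\,\bar\omega\otimes e^{\vee}$ as a $\bar\partial$-closed current of type $(0,q)$ with values in $\cL^\vee$. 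Via Serre duality, it gives a class in $\coH[q]{\X,\cL^\vee}\cong \coH[p]{\X,\omega_\X\otimes\cL}^\vee$. We will show this class pairs nontrivially with $\eta(\kappa^p)$, where $\kappa$ is the Kähler class.

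\textbf{Step 1 (extension across $\Si$).} First extend $T^\circ$ to $\X$. Since $\Si$ has codimension at least $q+2 > q$, i.e.\ dimension less than $n-q$, which is the dimension of $T$, the current has locally finite mass near $\Si$. Skoda–El Mir then gives a closed positive extension $T$. Its trivial extension is still strongly directed by $\alpha$, because $\omega$ extends holomorphically and $a$ extends by zero on $\Si$. Next, the local currents $S_\U := a\,\bar\omega\otimes e^\vee$ glue to a global $(0,q)$-current $S$ with values in $\cL^\vee$. The gluing works because changing the frame $e$ multiplies $\omega$ by a unit $u$ and $e^\vee$ by $u$; the product $a\,\bar\omega\wedge\omega$ is invariant, so the ambiguity is absorbed into $a$ by $|u|^2$, and we can normalize consistently. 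By \autoref{lemma:closed}, applied on each $\U$ (now including points of $\Si$ and of the zero locus of $\alpha$), $\bar\partial S = 0$. This is the step where the mild-singularity hypothesis enters. The main obstacle is checking that \autoref{lemma:closed} is applicable across $\Si$, i.e.\ that the extended $a$ is still a locally finite measure there. If needed, we would instead work with $\bar\partial S$ supported on $\Si$ and use codimension $\ge q+2$ together with the support theorem: a $\bar\partial$-exact flat current of degree $q+1$ cannot be supported on a set of real codimension greater than $2q+2$, or one argues via Hartogs-type extension of cohomology.

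\textbf{Step 2 (pairing).} Let $\kappa$ be a Kähler form. Then $\kappa^p$ is a closed $(p,p)$-form, and $\eta(\kappa^p)$ is the $(n,p)$-form with values in $\cL$ given locally by $(\kappa^p\wedge\omega)\otimes e$. By Dolbeault/Serre duality, $\bar\partial$-closedness of $S$ and of $\eta(\kappa^p)$ makes the pairing
\[
\langle S, \eta(\kappa^p)\rangle=\int_\X a\,\bar\omega\wedge\kappa^p\wedge\omega
\]
depend only on cohomology classes, since currents compute the same Dolbeault cohomology. Up to a unimodular constant this integral equals $\int_\X T\wedge\kappa^p$. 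That is the mass of the nonzero positive current $T$, hence strictly positive. Therefore $\eta(\kappa^p)$ is not $\bar\partial$-exact, and so $\coH[p]{\eta}([\kappa^p])\neq 0$.

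Finally, the sign and constant bookkeeping, $(\sqrt{-1})^{q^2}$ versus the ordering $\bar\omega\wedge\kappa^p\wedge\omega$, is routine. One just has to verify that it yields a nonzero multiple of the positive mass, which is all that is needed.
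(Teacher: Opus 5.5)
There is a genuine gap in Step~1, at exactly the point you flag as the main obstacle. Extending $T^\circ$ itself is fine. Your conclusion there is right, but it is Harvey's removable singularity theorem for closed positive currents of bidimension $(p,p)$ across analytic sets of dimension $<p$, not a dimension count, and it needs only $\codim\Si\ge q+1$. What fails is the claim that the extension is still strongly directed with a \emph{locally finite} $a$, so that $S=a\bar\omega\otimes e^\vee$ extends as an order-zero current to which \autoref{lemma:closed} applies. The mass of $T$ only controls $a\|\omega\|^2$. At points of $\Si$ where $\alpha$ vanishes, $a$ and even $a\|\omega\|$ can blow up.

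Here is a local model. In $\mathbb{C}^3$ take $q=1$, $\Si=\{0\}$ (codimension $3=q+2$), $\omega=df$ with $f=z_1^d+z_2^d+z_3^d$, and $T^\circ$ the current of integration over $\{f=0\}\setminus\{0\}$. Then $T^\circ=\sqrt{-1}\,a\,df\wedge\overline{df}$ with $a$ a constant multiple of $|df|^{-2}\,\mathrm{vol}_{\{f=0\}}$, which is locally finite away from $0$. But $|df|\sim|z|^{d-1}$ and the cone has volume $\sim r^4$ in $B_r$, so $\int_{B_r}a\|\omega\|\sim\int_0^r t^{4-d}\,dt=\infty$ for $d\ge5$. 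Thus $a\bar\omega$ admits no extension of order zero at all. A $\bar\partial$-closed extension does exist, namely a multiple of the residue current $\bar\partial(1/f)$, but it is not what your argument produces.

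Your support-theorem fallback does not repair this. There is no current $S$ on $\X$ to apply it to, and even when $S$ exists, $\bar\partial S$ is only a bidegree component of $dS$, so flatness is not available. A telltale sign is that your route never uses $\codim\Si\ge q+2$ rather than $q+1$.

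The missing idea is to extend the Dolbeault class instead of the current. This is the ``Hartogs-type extension of cohomology'' you mention only in passing, and it is exactly what the paper does. On $\X^\circ$, \autoref{lemma:closed} makes $S^\circ$ a $\bar\partial$-closed $\cL^\vee$-valued $(0,q)$-current, giving a class $c^\circ\in\coH[q]{\X^\circ,\cL^\vee}$. Since $\codim\Si\ge q+2$, restriction $\coH[q]{\X,\cL^\vee}\to\coH[q]{\X^\circ,\cL^\vee}$ is an isomorphism (Scheja; Banica--Stanasila). So $c^\circ$ is the restriction of a unique $c\in\coH[q]{\X,\cL^\vee}$, and this is where $q+2$ enters.

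Let $T$ be the Harvey extension. The same isomorphism for the locally free sheaf $\forms[q]{\X}$ shows that the map induced by $\alpha\colon\cL^\vee\to\forms[q]{\X}$ sends $c$ to a nonzero constant multiple of $[T]\in\coH[q]{\X,\forms[q]{\X}}$, since both restrict to the same class on $\X^\circ$. Your Step~2 then goes through once the integral $\langle S,\eta(\kappa^p)\rangle$ is replaced by the Serre-duality cup product $\coH[p]{\eta}([\kappa^p])\cup c=[\kappa^p]\cup\alpha(c)\in\coH[n]{\X,\omega_\X}$. This is a nonzero multiple of $\int_\X T\wedge\kappa^p>0$.
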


\begin{proof}
By \autoref{lemma:closed} above, the closed current $T^\circ$ can be viewed as an $\cL^\vee|_{\X^\circ}$-valued $\bar\partial$-closed current of degree $(0,q)$ on $\X^\circ$, giving a class
    \[
        c^\circ =[T^\circ]\in \coH[q]{\X^\circ,\cL^\vee|_{\X^\circ}}.
    \]
Moreover, the restriction morphism
    \[
        \coH[q]{\X,\cL^\vee}\longrightarrow
        \coH[q]{\X^\circ,\cL^\vee|_{\X^\circ}}
    \]
is an isomorphism since $\codim_\X \Si\ge q+2$ by assumption (see \cite[Chapter~2]{Banica74}). Let $c\in \coH[q]{\X,\cL^\vee}$ be the class mapping to $c^\circ$.
    
Since $\codim_\X \Si\ge q+1$, Harvey's removable singularities theorem \cite[Theorem~6]{Harvey74} applies to show that $T^\circ$ extends to a closed positive current $T$ on $\X$. Let $[T]\in \coH[q]{\X,\forms[q]{\X}}$ denote the class of the closed current $T$. By construction, $[T]$ is the image of $c$ under the map 
    \[
    \coH[q]{\X,\cL^\vee} \longrightarrow \coH[q]{\X,\forms[q]{\X}},
    \]
where $\cL^\vee\to \forms[q]{\X}$ is the morphism induced by $\alpha$.

    Finally, choose a K\"ahler form $\kahcl$ on $\X$. The image of $\coH[p]{\eta}(\kahcl^p)\otimes c$ under the natural map
    \[
    \coH[p]{\X,\omega_\X\otimes\cL} \otimes \coH[q]{\X,\cL^\vee} \longrightarrow \coH[n]{\X,\omega_\X}
    \]
    is $[\kahcl^p]\wedge [T]$. On the other hand, since $T$ is a nonzero positive current of bidegree $(q,q)$, we have
    \[
        \langle T, \kahcl^{p}\rangle \;>\; 0,
    \]
    so that $[\kahcl^p]\wedge [T]\neq 0$. This immediately implies that $\coH[p]{\eta}(\kahcl^p)\neq 0$, proving the lemma.
\end{proof}

The following result is an analogue, for compact K\"ahler manifolds, of \cite[Proposition 3.3]{EstevesKleiman}. 

\begin{proposition}\label{prop:esteves_kleiman}
Let $\X$ be a compact K\"ahler manifold of dimension $n$, and let
    \[
        \eta \colon \forms[p]{\X} \longrightarrow \cL
    \]
be a Pfaff field. Let $\Si\subset \Y \subset \X$ be closed subvarieties such that:
\begin{enumerate}
        \item $\Y$ is an $\eta$-invariant irreducible subvariety of dimension $p$, 
        \item $\Si$ has dimension at most $p-2$,
        \item $\Y^\circ:=\Y\setminus \Si$ is smooth,
        \item the restriction of $\eta$ to $\Y^\circ$ is nowhere vanishing. 
    \end{enumerate}
Then the map
    \[
        \coH[p]{\eta} : \coH[p]{\X,\forms[p]{\X}} \longrightarrow  \coH[p]{\X, \cL } 
    \]
    is nonzero.
\end{proposition}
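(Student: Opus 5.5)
Since $\Y^\circ$ is a smooth $\eta$-invariant subvariety of dimension $p$ on which $\eta$ does not vanish, the factorization $\eta_{\Y}\colon \forms[p]{\Y}\to\cL|_{\Y}$ restricts on $\Y^\circ$ to a nowhere vanishing map $\omega_{\Y^\circ}=\forms[p]{\Y^\circ}\to \cL|_{\Y^\circ}$. This map is therefore an isomorphism $\omega_{\Y^\circ}\cong\cL|_{\Y^\circ}$. I would prove nonvanishing of $\coH[p]{\eta}$ by pairing with the integration current. Fix a K\"ahler form $\kahcl$ on $\X$ and consider $\coH[p]{\eta}(\kahcl^p)\in\coH[p]{\X,\cL}$. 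The aim is to show that its restriction to $\Y$, or rather to $\Y^\circ$, is nonzero.

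The restriction of $\coH[p]{\eta}(\kahcl^p)$ to $\Y^\circ$ equals the image of $\kahcl^p|_{\Y^\circ}\in\coH[p]{\Y^\circ,\omega_{\Y^\circ}}$ under the isomorphism $\omega_{\Y^\circ}\cong\cL|_{\Y^\circ}$; this is just the commutative diagram defining invariance. It therefore suffices to show that the class $[\kahcl^p|_{\Y^\circ}]\in \coH[p]{\Y^\circ,\omega_{\Y^\circ}}$ is nonzero. This is a top-degree Dolbeault class on a noncompact manifold, so some care is needed. Let $\nu\colon\tilde\Y\to\Y$ be a resolution of singularities that is an isomorphism over $\Y^\circ$, with $E=\nu^{-1}(\Si)$. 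On $\tilde\Y$ we have $\int_{\tilde\Y}\nu^*\kahcl^p>0$, so $[\nu^*\kahcl^p]\ne0$ in $\coH[p]{\tilde\Y,\omega_{\tilde\Y}}\cong\CC$.

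The key step, and the main obstacle, is to pass from $\tilde\Y$ to $\Y^\circ$ using $\dim\Si\le p-2$. Suppose $\kahcl^p|_{\Y^\circ}=\bar\partial\beta$ with $\beta$ a smooth $(p,p-1)$-form on $\Y^\circ$. I would then show $\int_{\Y^\circ}\kahcl^p=0$, a contradiction. To do so I would use cutoff functions $\Phi_\varepsilon$ as in the proof of \autoref{lemma:closed}, built from the distance to $\Si$ in $\X$:
\[
\int_{\Y^\circ}\kahcl^p=\lim_{\varepsilon\to 0}\int_{\Y^\circ}(1-\Phi_\varepsilon)\bar\partial\beta=\lim_{\varepsilon\to0}\int_{\Y^\circ}\bar\partial\Phi_\varepsilon\wedge\beta .
\]
The difficulty is that $\beta$ need not be bounded near $\Si$. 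To avoid this, I would rather work in sheaf cohomology. Since $\Si$ has codimension $\ge2$ in $\Y$, the local cohomology and Hartogs-type arguments suggest that the map $\coH[p]{\Y,\nu_*\omega_{\tilde\Y}}\to\coH[p]{\Y^\circ,\omega_{\Y^\circ}}$ should be injective. Concretely, I would dualize: top-degree cohomology $\coH[p]{\Y^\circ,\omega_{\Y^\circ}}$ pairs with $\coH[0]{\Y^\circ,\cO{}}$, and Serre duality on the noncompact manifold is replaced by compactly supported cohomology. Then I would show that the trace map $\coH[p]{\tilde\Y,\omega_{\tilde\Y}}\to\CC$ factors through $\coH[p]{\Y^\circ,\omega_{\Y^\circ}}$. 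The hypothesis needed is the vanishing of the relevant local cohomology $\mathcal{H}^{j}_{E}$ in degrees $\le p$, and on $\Y$ this follows from the depth of $\nu_*\omega_{\tilde\Y}$ along $\Si$ being at least $2$, as in \cite[Chapter~2]{Banica74}.

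If the sheaf-theoretic route becomes messy, the fallback is to mimic \autoref{lemma:pfaff-kahler} directly. The integration current $T=[\Y]$ is a closed positive $(q,q)$-current with $q=n-p$, and it is strongly directed in an appropriate sense on $\Y^\circ$, via the conormal forms there. The pairing $\langle T,\kahcl^p\rangle>0$ then gives the nonvanishing, as in the last paragraph of that proof. Either way, the conclusion is $\coH[p]{\eta}(\kahcl^p)\neq0$.
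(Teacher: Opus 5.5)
Your main route cannot work: the group you are trying to land in is zero. If $\Si\neq\emptyset$, then $\Y^\circ$ is a connected noncompact complex manifold of dimension $p$. Top-degree coherent cohomology of such a manifold vanishes (Malgrange, Siu), so $\coH[p]{\Y^\circ,\omega_{\Y^\circ}}=0$.

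You can also see this on your resolution. For $x\in E$, the map $\mathsf{H}^p_{\{x\}}(\tilde{\Y},\omega_{\tilde{\Y}})\to\coH[p]{\tilde{\Y},\omega_{\tilde{\Y}}}\cong\mathbb{C}$ is already surjective, since the residue at $x$ generates. Hence $\mathsf{H}^p_{E}(\tilde{\Y},\omega_{\tilde{\Y}})\to\coH[p]{\tilde{\Y},\omega_{\tilde{\Y}}}$ is surjective, and restriction to $\tilde{\Y}\setminus E\cong\Y^\circ$ is zero.

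So $\kahcl^p|_{\Y^\circ}$ is \emph{always} $\bar\partial$-exact on $\Y^\circ$, even though $\int_{\Y^\circ}\kahcl^p>0$. The primitive $\beta$ necessarily blows up along $\Si$ (like a Bochner--Martinelli kernel at an isolated smooth point). Your boundary term $\lim_{\varepsilon\to0}\int_{\Y^\circ}\bar\partial\Phi_\varepsilon\wedge\beta$ therefore equals $\pm\int_{\Y^\circ}\kahcl^p\neq0$ rather than $0$.

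The same obstruction defeats the sheaf-theoretic version. Injectivity of $\coH[p]{\Y,\nu_*\omega_{\tilde{\Y}}}\to\coH[p]{\Y^\circ,\omega_{\Y^\circ}}$ would need local cohomology along $\Si$ to vanish in degrees $\le p$, i.e.\ depth $>p$. That is impossible, since depth along $\Si$ is at most $\codim(\Si,\Y)\le p$; depth $\ge 2$ only controls degrees $0$ and $1$. The trace does not factor through $\coH[p]{\Y^\circ,\omega_{\Y^\circ}}$ either: it is defined on $\mathsf{H}^p_c(\Y^\circ,\omega_{\Y^\circ})$, which maps \emph{into} $\coH[p]{\tilde{\Y},\omega_{\tilde{\Y}}}$. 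Restricting to $\Y^\circ$ discards everything, so this route only covers $\Si=\emptyset$, which is exactly the case the proposition is meant to go beyond.

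Your fallback is the right argument, and it is exactly the paper's proof: apply \autoref{lemma:pfaff-kahler} with $q=n-p$ and $\X^\circ=\X\setminus\Si$. As written, though, it skips the two points that carry it.

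First, the current is integration over $\Y^\circ$, viewed on $\X^\circ$, where $\Y^\circ$ is a closed submanifold. You must check that it is strongly directed by the twisted $q$-form $\alpha\in\coH[0]{\X,\forms[q]{\X}\otimes\omega_\X^\vee\otimes\cL}$ corresponding to $\eta$. Invariance and nonvanishing of $\eta$ on $\Y^\circ$ force $\alpha$, at points of $\Y^\circ$, to be a nowhere-zero multiple of $\dd z_{p+1}\wedge\cdots\wedge\dd z_n$, where $z_{p+1}=\cdots=z_n=0$ locally cuts out $\Y^\circ$. This is the only place those hypotheses enter.

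Second, $\langle T,\kahcl^p\rangle>0$ says nothing about $\coH[p]{\eta}(\kahcl^p)$ until it is identified with the Serre-duality pairing of $\coH[p]{\eta}(\kahcl^p)$ against a class $c\in\coH[q]{\X,\omega_\X\otimes\cL^\vee}$. That class comes from two ingredients:
\begin{itemize}
\item \autoref{lemma:closed}, which makes $T$ a $\bar\partial$-closed $(0,q)$-current on $\X^\circ$ with values in $\omega_\X\otimes\cL^\vee$;
\item the isomorphism $\coH[q]{\X,\omega_\X\otimes\cL^\vee}\cong\coH[q]{\X^\circ,\omega_\X\otimes\cL^\vee}$, valid because $\codim_\X\Si\ge n-(p-2)=q+2$.
\end{itemize}
Thus the hypothesis $\dim\Si\le p-2$ is used in degree $q$ on the ambient manifold, where cohomology does extend across $\Si$. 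It is not used in degree $p$ on $\Y^\circ$, where no such extension can hold.
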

    
\begin{proof}
Set $\X^\circ:=\X\setminus \Si$ and $q:=n-p$, and let $\alpha\in \coH[0]{\X,\forms[q]{\X}\otimes \omega_X^\vee\otimes \cL}$ be the twisted $q$-form induced by $\eta$. The current of integration along $\Y^\circ$ is a closed positive current on $\X^\circ$, which is strongly directed by $\alpha|_{\X^\circ}$ since the restriction of $\eta$ to $\Y^\circ$ is nowhere vanishing by assumption. The statement then follows from \autoref{lemma:pfaff-kahler}.
\end{proof}

\section{Degeneracy loci of Poisson structures}

\subsection{Poisson structures}\label{SS:Poissonstructures}
We recall a few basic notions from holomorphic Poisson geometry, mostly following \cite[\S2--\S7]{Gualtieri2013}.

Let $\X$ be a complex manifold and let $\sigma\in \coH[0]{\X,\wedge^2\cT{\X}}$ be a holomorphic Poisson structure, i.e.~$[\sigma,\sigma]=0$. We denote by
\[
    \sigma^\sharp\colon\forms[1]{\X}\longrightarrow \cT{\X}
\]
the morphism defined by contraction with $\sigma$. The image $\sigma^\sharp(\forms[1]{\X})\subset\cT{\X}$ is involutive.  Its saturation is a subsheaf
\[
\cF_\sigma \subset \cT{\X},
\]
defining a foliation that agrees with the symplectic foliation of $\sigma$ on the open dense set where the latter has maximal rank.  Note, however, that the leaves of $\cF_{\sigma}$ are, in general, unions of symplectic leaves of $\sigma$.

A \defn{Poisson subscheme} is a subscheme $\Y\subset \X$ to which $\sigma$ is tangent.  Equivalently, it is preserved by all vector fields of the form $\hook{\dd f}\sigma$ for $f \in \cO{\X}$ (the Hamiltonian vector fields).  It is called a \defn{strong Poisson subscheme}~\cite[Def.~4]{Gualtieri2013} if it is preserved by all germs of vector fields $\xi$ such that $\lie{\xi}\sigma = 0$ (the Poisson vector fields, which include the Hamiltonian vector fields).

For $k\ge 0$, the \defn{$2k$th degeneracy locus of $\sigma$} is the set
\[
    \Dgn[2k]{\sigma} :=\bigl\{x\in \X\mid \rank(\sigma_x^\sharp)\le 2k\bigr\},
\]
equipped with its natural scheme structure as the zero locus of 
\[
    \sigma^{k+1}\in \coH[0]{\X,\wedge^{2k+2}\cT{\X}}.
\]
It is a strong Poisson subscheme of $\X$ by \cite[Prop.~6]{Gualtieri2013}.

If $\Y \subset \X$ is a Poisson subscheme on which $\sigma$ has maximal rank $2k$, the \defn{Poisson singular locus} of $(\Y,\sigma)$ is the union
\[
\sing(\Y,\sigma) := \sing(\Y) \cup (\Dgn[2k-2]{\sigma} \cap \Y)
\]
of the singular locus of $\Y$, and the locus where the rank of $\sigma|_\Y$ drops below its maximal value.  If $\Y$ is a strong Poisson subscheme, so is $\sing(\Y,\sigma)$.

\subsection{Hodge numbers and leaf closures}

Let $\X$ be a compact K\"ahler manifold and let $\sigma$ be a holomorphic Poisson structure.  By an \defn{algebraic symplectic leaf} we mean a symplectic leaf of $\sigma$ whose closure is an irreducible closed subvariety of the same dimension (i.e.~a symplectic leaf whose Zariski closure has the same dimension).  Note that the closure of an algebraic symplectic leaf of dimension $2k$ is an invariant subvariety for the Pfaff field $\sigma^k$, to which we may apply \autoref{prop:esteves_kleiman}.  More generally, we may consider Poisson subvarieties whose singularities are sufficiently mild and whose foliations support strongly directed positive currents.   We deduce that the existence of such subvarieties with sufficiently mild singularities puts a constraint on the Hodge numbers:
\begin{proposition}\label{prop:leaves-hodge}
    Let $(\X,\sigma)$ be a compact K\"ahler Poisson manifold.  Suppose that $\X$ contains a closed Poisson subvariety $\Y\subset\X$ with the following properties:
    \begin{enumerate}
        \item $\sigma|_\Y$ generically has rank $2k$,
        \item $\dim(\sing(\Y,\sigma|_\Y)) \leq 2k-2$, and
        \item the induced symplectic foliation on $\Y \setminus \sing(\Y,\sigma|_\Y)$ supports a closed strongly directed positive current (which is automatic if $\dim \Y = 2k$, i.e.~$\Y$ is the closure of an algebraic symplectic leaf).
    \end{enumerate}
    Then $\coh[0,2k]{\X} >0$.
\end{proposition}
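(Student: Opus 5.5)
Consider the Pfaff field of degree $2k$ given by the Pfaffian power
\[
\eta := \sigma^k \in \coH[0]{\X,\wedge^{2k}\cT{\X}}, \qquad \eta\colon \forms[2k]{\X}\to\cO{\X},
\]
so that $\cL=\cO{\X}$. The plan is to show that $\coH[2k]{\eta}\colon \coH[2k]{\X,\forms[2k]{\X}}\to\coH[2k]{\X,\cO{\X}}$ is nonzero. Once this is done the conclusion follows at once, since the target is $\coH[2k]{\X,\cO{\X}}$, whose dimension is $\coh[0,2k]{\X}$.

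To prove nonvanishing I would apply \autoref{lemma:pfaff-kahler} with $p=2k$, $q=n-2k$ and $\cL$ replaced by $\omega_\X^\vee$, so that $\omega_\X\otimes\omega_\X^\vee=\cO{\X}$. The twisted $q$-form $\alpha\in\coH[0]{\X,\forms[q]{\X}\otimes\omega_\X^\vee}$ is the one corresponding to $\sigma^k$ under the isomorphism $\wedge^{2k}\cT{\X}\cong\forms[n-2k]{\X}\otimes\omega_\X^\vee$. Set $\Si:=\sing(\Y,\sigma|_\Y)$ and $\X^\circ:=\X\setminus\Si$. By hypothesis (2), $\dim\Si\le 2k-2$, so $\codim_\X\Si\ge n-2k+2=q+2$, which is exactly the codimension hypothesis of the lemma.

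It remains to produce a nonzero closed positive current on $\X^\circ$ strongly directed by $\alpha|_{\X^\circ}$. Hypothesis (3) gives a closed strongly directed positive current $T_\Y$ on $\Y^\circ:=\Y\setminus\Si$ for the symplectic foliation $\cG$ of codimension $\dim\Y-2k$ on the smooth variety $\Y^\circ$. On $\Y^\circ$ the rank of $\sigma$ is constantly $2k$, so $\sigma^k|_{\Y^\circ}$ is a nowhere-vanishing section of $\wedge^{2k}\cT{\Y^\circ}\subset\wedge^{2k}\cT{\X}|_{\Y^\circ}$ spanning $\wedge^{2k}\cT{\cG}$. I take $T^\circ:=i_*T_\Y$, the pushforward under the closed embedding $i\colon\Y^\circ\hookrightarrow\X^\circ$; it is closed, positive, nonzero and of bidegree $(q,q)$.

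The main obstacle is checking that $T^\circ$ is strongly directed by $\alpha$, that is, that locally $T^\circ=(\sqrt{-1})^{q^2}a\,\omega\wedge\bar\omega$ with $\omega$ a local generator of $\alpha(\omega_\X)$. Near a point of $\Y^\circ$ I would choose coordinates $(x,y,z)$ on $\X$ in which $\Y^\circ=\{z=0\}$ and the leaves of $\cG$ are $\{y=\text{const},\,z=0\}$, using that $\cG$ is regular on $\Y^\circ$. Then $\sigma^k = u\,\partial_{x_1}\wedge\dots\wedge\partial_{x_{2k}}$ along $\Y^\circ$ with $u$ nonvanishing. Consequently $\omega=\iota_{\sigma^k}(dx\wedge dy\wedge dz)$ is, up to a unit and along $\Y^\circ$, equal to $dy\wedge dz$. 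The directed current $T_\Y$ has the form $a'\,dy\wedge d\bar y$ on $\Y^\circ$, so $i_*T_\Y = a'\,\delta_{z=0}\,dy\wedge dz\wedge d\bar y\wedge d\bar z$ up to constants. The subtlety is that $\omega$ may differ from $dy\wedge dz$ off $\Y^\circ$. However, a measure supported on $\Y^\circ$ only sees the restriction of the coefficients to $\Y^\circ$, where $\omega\wedge\bar\omega$ and $|u|^2\,dy\wedge dz\wedge d\bar y\wedge d\bar z$ agree. Absorbing $|u|^{-2}$ into the measure $a$ then gives the required form. The same argument also settles the case $\dim\Y=2k$: there $\cG$ is the foliation by points of a single leaf, $T_\Y$ is the constant function $1$, and $i_*T_\Y$ is the current of integration along $\Y^\circ$, exactly as in \autoref{prop:esteves_kleiman}.

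With these hypotheses verified, \autoref{lemma:pfaff-kahler} gives $\coH[2k]{\eta}\neq0$, hence $\coH[2k]{\X,\cO{\X}}\neq0$, i.e. $\coh[0,2k]{\X}>0$.
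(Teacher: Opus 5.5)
Your proposal is correct and is essentially the paper's own argument. You apply \autoref{lemma:pfaff-kahler} to the Pfaff field $\sigma^k$, taking $\cL=\omega_\X^\vee$ so that the target is $\coH[2k]{\X,\cO{\X}}$, use hypothesis (2) to get $\codim_\X \Si\ge q+2$, and push the current on $\Y^\circ$ forward to $\X^\circ$. Your local-coordinate check that the pushforward is strongly directed by $\sigma^k$ (a measure supported on $\Y^\circ$ only sees the coefficients of $\omega\wedge\bar\omega$ along $\Y^\circ$) just fills in a step the paper states in one line.
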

\begin{proof}
    Set $\Si = \sing(\Y, \sigma|_\Y)$, $\X^\circ := \X \setminus \Si$, and $\Y^\circ := \Y \setminus \Si$. Write $\cF^\circ$ for the symplectic foliation of $\sigma|_{\Y^\circ}$ on $\Y^\circ$, and let
    \[
        p := 2k, \qquad r := \dim \Y - p,
    \]
    so that $r$ is the codimension of $\cF^\circ$ in $\Y^\circ$.

    By hypothesis~(3), there exists a closed positive current $T^\circ$ on $\Y^\circ$ strongly directed by a twisted $r$-form
    $\alpha^\circ$ defining $\cF^\circ$; locally on $\Y^\circ$,
    \begin{equation}\label{eq:tau-kahler}
        T^\circ = a \, (\sqrt{-1})^{r^2} \, \alpha^\circ \wedge \overline{\alpha^\circ},
    \end{equation}
    where $a$ is a positive locally finite Borel measure. In particular, $T^\circ$ has bidimension $(p, p)$ on $\Y^\circ$.

    Let $\Theta^\circ$ denote the pushforward of $T^\circ$ along the inclusion $\Y^\circ \hookrightarrow \X^\circ$:
    \[
       \langle \Theta^\circ, \varphi \rangle := \langle T^\circ, \varphi|_{\Y^\circ} \rangle
    \]
    for any compactly supported smooth test form $\varphi$ of bidegree $(p, p)$ on $\X^\circ$. Then $\Theta^\circ$ is a closed positive current on $\X^\circ$, supported on $\Y^\circ$, of bidimension $(p, p)$ (equivalently, of bidegree $(n - p, n - p)$). The local expression~\eqref{eq:tau-kahler} pushes forward to the local expression of $\Theta^\circ$ as a positive measure multiple of $\omega \wedge \overline{\omega}$, where $\omega$ is the local generator of $\cL^\vee \subseteq \forms[n - p]{\X^\circ}$ induced by $\sigma^k$; hence $\Theta^\circ$ is strongly directed by $\sigma^k$.

    Since $\codim_\X \Si \geq (n - p) + 2$ by hypothesis~(2), \autoref{lemma:pfaff-kahler} applies and gives $h^{0, 2k}(\X) > 0$.
\end{proof}

\subsection{Modular residues}
\label{sec:modres}
In \cite[\S4.3]{Gualtieri2013}, Gualtieri and the third author introduced the \defn{modular residues}
\[
    \Resmod[k]{\sigma}\in \coH[0]{\Dgn[2k]{\sigma},\der[2k+1]{\Dgn[2k]{\sigma}}},
\]
for $k \ge 0$, where $\der[\bullet]{\Dgn[2k]{\sigma}} := (\forms{\Dgn[2k]{\sigma}})^\vee$ is the sheaf of multiderivations on $\Dgn[2k]{\sigma}$.  These residues can be defined locally as follows.  For a local trivialization $\mu \in \omega_{\X}$ over some open set $\U\subset\X$, recall that the \defn{modular vector field} of $\sigma$ with respect to $\mu$ is the vector field $Z$ determined uniquely by the formula
\[
\hook{Z}\mu = - \dd(\hook{\sigma}\mu).
\]
Then the $k$th modular residue is given by the formula
\[
i_*\Resmod[k]{\sigma|_\U} = (Z \wedge \sigma^k)|_{\Dgn[2k]{\sigma} \cap \U},
\]  
where
\[
i_* : \der{\Dgn[2k]{\sigma}} \hookrightarrow \wedge^{\bullet}\cT{\X}|_{\Dgn[2k]{\sigma}}
\]
is the pushforward of multiderivations along the inclusion $i : \Dgn[2k]{\sigma}\to\X$.

One of the primary uses of the modular residues is the following lemma; it is a special case of \cite[Lemma~16]{Gualtieri2013}, in which the ample Poisson module is the anticanonical bundle.
\begin{lemma}[{\cite[Lemma~16]{Gualtieri2013}}]\label{lemma:GP64}
    Let $(\X,\sigma)$ be a Poisson Fano manifold, let $k$ be a positive integer and let $\Y$ be an irreducible, closed, strong Poisson subscheme of $\Dgn[2k]{\sigma}$ with the following properties: 
    \begin{enumerate}
        \item $\Y$ is not contained in $\Dgn[2k-2]{\sigma}$, and
        \item $\Y$ is contained in the vanishing locus of
        \[
        i_*\Resmod[k]{\sigma} \in \coH[0]{\Dgn[2k]{\sigma},\wedge^{2k+1}{\cT{\X}}|_{\Dgn[2k]{\sigma}}}.
        \]
    \end{enumerate}
    Then the Poisson singular locus $\sing(\Y,\sigma|_{\Y})$ is non-empty and has a component of dimension at least $2k-1$.
\end{lemma}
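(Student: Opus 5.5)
The plan is to argue by contradiction, in the spirit of the proof of \autoref{theorem:fibration}. Assume that every irreducible component of $\Si:=\sing(\Y,\sigma|_\Y)$ has dimension at most $2k-2$; this includes the case $\Si=\emptyset$. Set $\Y^\circ:=\Y\setminus\Si$, which is smooth. On $\Y^\circ$ the bivector $\sigma|_{\Y^\circ}$ has constant rank $2k$, by hypothesis~(1) together with the definition of $\Si$. So it defines a regular symplectic foliation $\cF^\circ$ on $\Y^\circ$ of dimension $2k$ and codimension $r:=\dim\Y-2k$. We will derive the vanishing $c_1(\omega_\X^\vee)^{\dim \Y}\cdot\Y=0$. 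This is absurd, because $\omega_\X^\vee$ is ample and $\Y$ is a positive-dimensional projective subvariety.

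\emph{Step 1: a flat leafwise connection.} The anticanonical bundle $\omega_\X^\vee$ is a Poisson module. Locally, for a trivialization $\mu$ of $\omega_\X$, the action of a function $f$ is through the Hamiltonian vector field of $f$, corrected by a term involving the modular vector field $Z$. Restricting to $\Y$, which is a strong Poisson subscheme and hence preserved by $Z$, we obtain a Poisson module structure on $\omega_\X^\vee|_{\Y^\circ}$. The hypothesis that $Z\wedge\sigma^k$ vanishes on $\Y$ says that, along $\Y^\circ$, $Z$ is tangent to the leaves of $\cF^\circ$. Since $\sigma^\sharp$ is surjective onto $\cT{\cF^\circ}$, we can write $Z|_{\Y^\circ}=\sigma^\sharp(\theta)$ locally. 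Hence the Poisson module structure descends to a flat partial connection on $\omega_\X^\vee|_{\Y^\circ}$ along $\cT{\cF^\circ}$. The expected local computation is that changing $\mu$ by a unit $g$ changes $Z$ by the Hamiltonian vector field of $\log g$, which makes this connection well defined. This is the Poisson analogue of the Bott connection.

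\emph{Step 2: a Bott-type vanishing.} As in \cite[Lemma 3.6]{DruelCodimensiontwo}, a flat partial connection along a regular foliation of codimension $r$ forces $c_1(\omega_\X^\vee|_{\Y^\circ})$ to lift to $\coH[1]{\Y^\circ,\coN{\cF^\circ}}$. Consequently its $(r+1)$st power lifts to a group built from $\wedge^{r+1}\coN{\cF^\circ}=0$, so $c_1(\omega_\X^\vee)^{r+1}|_{\Y^\circ}=0$ in Hodge cohomology of $\Y^\circ$.

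\emph{Step 3: extension across $\Si$.} By assumption, $\Si$ has codimension at least $r+2$ in $\Y$. We pass to a resolution $\tilde\Y\to\Y$ that is an isomorphism over $\Y^\circ$. We then want to show that the class $c_1^{r+1}$, which vanishes on $\Y^\circ$, still pairs to zero against $c_1(\omega_\X^\vee)^{2k-1}$ on $\tilde\Y$. The intended route is to represent $c_1(\omega_\X^\vee)^{2k-1}$ by a general complete intersection of ample divisors. Such a complete intersection has dimension $r+1$ and avoids $\Si$, since $\dim\Si\le 2k-2<2k-1$. On the resulting compact smooth subvariety $W\subset\Y^\circ$ of dimension $r+1$, Step 2 gives $c_1(\omega_\X^\vee|_W)^{r+1}=0$. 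This contradicts ampleness, because $c_1(\omega_\X^\vee|_W)^{r+1}=\deg_{\omega_\X^\vee}W>0$.

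The main obstacle is Step 1: checking carefully that vanishing of the modular residue on the (possibly non-reduced) strong Poisson subscheme $\Y$ really yields a globally well-defined flat partial connection along $\cF^\circ$, and not merely a Poisson module structure. The second delicate point is making sure that the Bott-type vanishing of Step 2 restricts correctly to $W$, which is not tangent to $\cF^\circ$. For this I would use the restriction of the lifted class in $\coH[1]{\Y^\circ,\coN{\cF^\circ}}$ to $W$ and the fact that its $(r+1)$-fold product factors through $\wedge^{r+1}\coN{\cF^\circ}|_W=0$.
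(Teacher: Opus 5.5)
Your proof is correct. The paper gives no proof of its own and simply imports this result from Gualtieri–Pym; your route is, as far as I recall, essentially the argument there. You use the vanishing of $Z\wedge\sigma^k$ on $\Y$ to descend the anticanonical Poisson module to a flat partial connection along the regular symplectic foliation of $\Y^\circ$. You then lift $c_1(\omega_\X^\vee|_{\Y^\circ})$ to $\coH[1]{\Y^\circ,\coN{\cF^\circ}}$, so its $(r+1)$st power vanishes. Finally you cut $\Y$ by $2k-1$ general ample divisors to get a compact $W\subset\Y^\circ$ of dimension $r+1$ with $c_1(\omega_\X^\vee|_W)^{r+1}=0$, which contradicts ampleness.

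The two points you flag are harmless. On the reduced smooth locus $\Y^\circ$, the covectors killed by $\sigma^\sharp$ are exactly the annihilator of $\cT{\cF^\circ}$, and they act $\mathcal{O}$-linearly by pairing with $Z$. So the descent is canonical and global, with no choices involved. The passage to $W$ is just functoriality of Atiyah classes and cup products in Hodge cohomology under $W\hookrightarrow\Y^\circ$, so the resolution $\tilde\Y$ is unnecessary.
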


\subsection{Submaximal degeneracy loci on odd-dimensional Fanos}
We now combine the results of the previous sections to establish our main theorem (stated as \autoref{theorem:main} in the introduction): 
\begin{theorem}\label{thm:bondal-odd}
    Let $\X$ be a Fano manifold of dimension $2k+1$ for some $k > 1$, and let $\sigma$ be a Poisson structure on $\X$ of maximal rank $2k$.  Then $\Dgn[2k-4]{\sigma}$ has an irreducible component of dimension at least $2k-3$.
\end{theorem}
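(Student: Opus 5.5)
Let $\cF=\cF_\sigma$ be the codimension-one foliation obtained by saturating $\sigma^\sharp(\forms[1]{\X})$; since $\sigma$ has maximal rank $2k$ on a $(2k+1)$-dimensional manifold, $\cF$ has codimension one. The argument splits according to $\codim \sing(\cF)$.

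\emph{First case: $\codim\sing(\cF)\ge 3$.} Here \autoref{theorem:fibration} applies, since a Fano manifold is weak Fano. It shows that $\cF$ is given by a morphism $f:\X\to\PP^1$ with irreducible fibres. A general fibre $\F$ is smooth of dimension $2k$. It is tangent to $\cF$, so it is a Poisson submanifold, and $\sigma|_\F$ has generic rank $2k$. Since $\cN{\cF}|_\F$ is trivial, adjunction gives $-K_\F=-K_\X|_\F$, which is ample, so $\F$ is Fano of even dimension $2k$. The submaximal case of \cite{Gualtieri2013} (case \ref{it:even-dim}) then produces a component of $\Dgn[2k-4]{\sigma|_\F}$ of dimension at least $2k-3$. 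Since $\Dgn[2k-4]{\sigma|_\F}\subset \Dgn[2k-4]{\sigma}$, we are done in this case.

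\emph{Second case: $\sing(\cF)$ has a codimension-two component.} Note first that $\Dgn[2k-2]{\sigma}\supset\sing(\cF)$ up to the divisorial part. By Polishchuk/Beauville, $\Dgn[2k-2]{\sigma}$ has a component of dimension $\ge 2k-1$. Let $\Y$ be an irreducible component of $\Dgn[2k-2]{\sigma}$ of dimension $\ge 2k-1$; it is a strong Poisson subvariety. If $\Y\subset\Dgn[2k-4]{\sigma}$, we are done. Otherwise $\sigma|_\Y$ has generic rank $2k-2$, and $\dim\Y\in\{2k-1,2k\}$, since a divisor cannot lie in the rank-drop locus without contradicting the generic rank on $\X$... in fact both values are possible. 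We then apply \autoref{lemma:GP64} with $k-1$ in place of $k$. If $\Y$ lies in the zero locus of $\Resmod[k-1]{\sigma}$, the lemma gives a component of $\sing(\Y,\sigma|_\Y)$ of dimension $\ge 2k-3$. The rank-drop part of this locus lies in $\Dgn[2k-4]{\sigma}$, and the singular-variety part has to be handled as well: I would argue that $\sing(\Y)$, being a strong Poisson subscheme on which the induced rank must drop by the modular residue vanishing, also lies in $\Dgn[2k-4]{\sigma}$, or else one replaces $\Y$ by an appropriate component of it.

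If instead the modular residue is nonzero on $\Y$, then $Z\wedge\sigma^{k-1}$ is a nonzero $(2k-1)$-vector tangent to $\Y$. This forces $\dim\Y\ge 2k-1$, and generically on $\Y$ it defines a foliation of dimension $2k-1$ whose leaves are unions of symplectic leaves swept by the modular flow. We then aim to apply \autoref{prop:leaves-hodge} to $\Y$, with $2k$ replaced by $2k-2$. Assume for contradiction that $\Dgn[2k-4]{\sigma}\cap\Y$ and $\sing(\Y)$ have dimension $\le 2k-4$, so hypothesis (2) holds. For hypothesis (3): if $\dim\Y=2k-2$ it is automatic. If $\dim\Y=2k-1$, the symplectic foliation has codimension one in $\Y$, transverse to the modular vector field $Z$. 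Using that $Z$ preserves the symplectic foliation, one builds an invariant transverse measure, for instance by averaging $\hook{Z}$ against a volume form defined by $\sigma^{k-1}$ and $Z$, giving a closed strongly directed positive current. The proposition then yields $h^{0,2k-2}(\X)>0$, which contradicts Kodaira vanishing on a Fano manifold. The case $\dim \Y = 2k$ with nonvanishing residue is reduced similarly, by restricting to the divisor $\Y$ or to a codimension-two component of $\sing(\cF)$ inside it.

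\emph{Expected obstacle.} The main difficulty is constructing the closed strongly directed positive current from the modular vector field in the $\dim\Y=2k-1$ subcase. The plan is to show that the twisted $1$-form $\hook{Z}$-dual defining the symplectic foliation on $\Y^\circ$ becomes closed after the natural trivialization by the modular data. Then $\sqrt{-1}\,\alpha\wedge\bar\alpha$ is itself closed and positive, which gives the required current.
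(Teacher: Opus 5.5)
Your first case (fibration, general fibre Fano of dimension $2k$, then \cite{Gualtieri2013}) is the paper's argument. The second case has genuine gaps.

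The main gap is your dichotomy for the modular residue: ``vanishes identically on $\Y$'' versus ``not identically zero''. Set $\rho=\Resmod[k-1]{\sigma}|_{\Y^\circ}$, a section of the anticanonical bundle of $\Y^\circ$. The one-form $\alpha=\hook{\sigma^{k-1}}\rho^{-1}$ is holomorphic only where $\rho\neq 0$.

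\begin{itemize}
\item If $\rho\not\equiv 0$ but has a nonempty zero divisor $D\subset\Y^\circ$, then $\alpha$ has poles along $D$. So $\sqrt{-1}\,\alpha\wedge\bar\alpha$ is not a closed positive current on $\Y^\circ$.
\item Discarding $D$ does not help: $D$ has dimension $2k-2$, which violates hypothesis (2) of \autoref{prop:leaves-hodge}.
\end{itemize}

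So the closed-current argument only works when $\rho$ is nowhere zero on $\Y^\circ$. When $\rho$ has a zero there, the paper does the following:
\begin{itemize}
\item Take an irreducible component $\W$ of the zero scheme of $\Resmod[k-1]{\sigma}|_\Y$ that meets $\Y^\circ$. Then either $\W=\Y$ or $\dim\W=2k-2$.
\item $\W$ is a strong Poisson subvariety not contained in $\Dgn[2k-4]{\sigma}$, and $\sing(\W,\sigma|_\W)\subset\Si$.
\item Apply \autoref{lemma:GP64} to $\W$, not to $\Y$. This produces a component of $\Si$ of dimension at least $2k-3$, contradicting $\dim\Si\le 2k-4$.
\end{itemize}
This step is missing from your plan.

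Your handling of the Poisson singular locus also does not work.
\begin{itemize}
\item Vanishing of the modular residue does not force a rank drop. On any $(2k-2)$-dimensional leaf closure preserved by the modular flow, $Z\wedge\sigma^{k-1}$ vanishes automatically.
\item ``Assume $\dim\sing(\Y)\le 2k-4$'' is not the negation of the theorem, so it is not a valid contradiction hypothesis.
\end{itemize}
Components of dimension $2k-3$ are harmless, since by parity they lie in $\Dgn[2k-4]{\sigma}$. A $(2k-2)$-dimensional component $\Z$ of $\Si$, however, may be the closure of an algebraic symplectic leaf. The paper handles this by applying \autoref{prop:leaves-hodge} to $\Z$ itself, where hypothesis (3) is automatic, together with $\coh[0,2k-2]{\X}=0$. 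This forces $\dim\sing(\Z,\sigma|_\Z)\ge 2k-3$, and parity then places a $(2k-3)$-dimensional component inside $\Dgn[2k-4]{\sigma}$.

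Finally, choosing $\Y$ via Polishchuk--Beauville allows $\Y$ to be a divisor, a case you only gesture at. There the symplectic foliation has codimension two in $\Y$, and your one-form construction does not apply. Instead, take $\Y$ to be the codimension-two component of $\sing(\cF)$ that your case hypothesis already provides. Then split according to whether $\dim\Si$ is $2k-3$, $2k-2$, or at most $2k-4$.
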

\begin{proof}
    Let $\cF = \cF_\sigma$ be the saturated foliation defined by $\sigma$.  Note that we have $\codim(\sing(\cF)) \ge 2$.

    If $\codim (\sing(\cF)) \ge 3$, then by \autoref{theorem:fibration}, $\cF$ is the relative tangent sheaf of a morphism $f : \X \to \PP^1$.  A general fibre $\F$ is then a Poisson subvariety of codimension one.  By the adjunction formula, $\F$ is a Fano manifold of even dimension, and hence the result follows from \cite[Theorem 27]{Gualtieri2013}.

    It remains to treat the case in which $\sing(\cF)$ has an irreducible component $\Y$ of codimension two in $\X$ (i.e.~$\dim \Y = 2k-1$), which we equip with the reduced scheme structure.  Let $\Si := \sing(\Y,\sigma|_\Y)$ be its Poisson singular locus.  Note that $\Si$ and $\Y$ are strong Poisson subvarieties of $\Dgn[2k-2]{\sigma}$, and $\dim \Si \leq 2k-2$.  We treat several cases according to the dimension of $\Si$.
        
    Suppose first that $\dim \Si = 2k-3$. Then $\sigma|_\Si$ must have rank at most $2k-4$.  Hence $\Si \subset \Dgn[2k-4]{\sigma}$ and the result follows. 

    Next, suppose that $\dim \Si = 2k-2$, and let $\Z$ be an irreducible component of $\Si$ of dimension $2k-2$. If the generic rank of $\sigma$ along $\Z$ is less than $2k-2$, then $\Z \subset  \Dgn[2k-4]{\sigma}$ and the result follows.  Otherwise, $\Z$ is the closure of an algebraic symplectic leaf of dimension $2k-2$.  Since $\X$ is Fano, we have $\coh[0,2k-2]{\X} =0$.  Hence by \autoref{prop:leaves-hodge}, the Poisson singular locus of $\Z$ has dimension at least $2k-3$, so it must be contained in $\Dgn[2k-4]{\sigma}$, and again, the result follows.

    We claim that the cases examined above are the only ones possible, i.e.~that the situation $\dim \Si \leq 2k-4$ cannot occur.
    
 Indeed, assume by contradiction that $\dim \Si \leq 2k - 4$.  Then $\Y^\circ := \Y \setminus \Si$ is smooth of dimension $2k - 1$ and $\sigma|_{\Y^\circ}$ is regular of rank $2k - 2$, i.e.\ the symplectic foliation of $\Y^\circ$ is regular of codimension one. Let $\rho := \Resmod[k-1]{\sigma}|_{\Y^\circ}$ be the restriction of the modular residue to $\Y^\circ$; it is a section of the anticanonical line bundle of $\Y^\circ$.

We first show that the vanishing locus of $\rho$ is nonempty. Suppose, on the contrary, that $\rho$ never vanishes. Then $\rho^{-1}$ is a nonvanishing holomorphic volume form on $\Y^\circ$, and the contraction $\alpha := \iota_{\sigma^{k-1}} \rho^{-1}$ is a nonvanishing holomorphic one-form defining the symplectic foliation of $\Y^\circ$. We claim that $\alpha$ is closed. To see this, let $Z$ be the modular vector field of $\sigma|_{\Y^\circ}$ with respect to $\rho^{-1}$. Since $\rho = \Resmod[k-1]{\sigma}|_{\Y^\circ}$ is nowhere vanishing, the projection of $Z$ to the normal bundle of the symplectic foliation is nowhere zero; hence Hamiltonian vector fields together with $Z$ span $\T_{\Y^\circ}$. Moreover, $\alpha$ vanishes on Hamiltonian vector fields and $\alpha(Z)$ is a nonzero constant. To verify that $\dd\alpha = 0$ it is enough to consider the formula
\[
    \dd\alpha(X, Y) = X(\alpha(Y)) - Y(\alpha(X)) - \alpha([X, Y]),
\]
when $X, Y$ are both Hamiltonian, or when $X$ is Hamiltonian and $Y = Z$. Either way, the first two terms vanish because the functions $\alpha(X)$ and $\alpha(Y)$ are constant, and the third term vanishes because $[X, Y]$ is Hamiltonian (being the bracket of a Hamiltonian and a Poisson vector field). Thus $\dd\alpha = 0$, as claimed. The product $\sqrt{-1}\,\alpha \wedge \overline{\alpha}$ therefore defines a closed strongly directed positive current of bidegree $(1, 1)$ on $\Y^\circ$, and \autoref{prop:leaves-hodge} gives $h^{0, 2k-2}(\X) > 0$, contradicting the assumption that $\X$ is Fano.

Hence the vanishing locus of $\rho$ is nonempty. Set  $\bar\rho=\Resmod[k-1]{\sigma}|_{\Y}$. By \cite[Prop.~7.5 and Lemma~3.4]{Gualtieri2013}, the scheme-theoretic vanishing locus of $\bar\rho$ and each of its reduced irreducible components are strong Poisson subschemes of $\Dgn[2k-2]{\sigma}$. By assumption, one can choose such a component $\W$ so that $\W\cap \Y^\circ\not=\emptyset$. Note that $\W=\Y$  when $\rho$ vanishes identically and $\W$ has dimension $2k - 2$ otherwise. 
In both cases, the singular Poisson locus  $\sing(\W,\sigma|_\W)$, which is a strong Poisson subscheme, is contained in $\Si$.
Applying \autoref{lemma:GP64} to $\W$     
yields a component of $\Si$ of dimension at least $2k - 3$, thus obtaining the sought contradiction.
\end{proof}

\bibliographystyle{hyperamsplain}
\bibliography{bondal}

\end{document}

%% file: bondal-preamble.tex
\usepackage{titling}

\usepackage{amsfonts,amsmath,amssymb,amsthm}
\usepackage{mathscinet}
\usepackage{mathrsfs}

\usepackage{combelow}

\usepackage{enumitem}
\setitemize{itemsep=0em}

\usepackage{multirow}

\usepackage[font=small,labelfont=bf]{caption}

\usepackage{todonotes}

\newcommand{\email}[1]{\href{mailto:#1}{#1}}

\usepackage[pdfusetitle]{hyperref}
\usepackage{color}
\definecolor{darkred}{rgb}{.8,0,0}
\definecolor{tocolor}{rgb}{.1,.1,.1}
\definecolor{urlcolor}{rgb}{.2,.2,.6}
\definecolor{linkcolor}{rgb}{.1,.1,.5}
\definecolor{citecolor}{rgb}{.4,.2,.1}
\definecolor{gray}{rgb}{.8,.8,.8}

\hypersetup{
	colorlinks=true,
	urlcolor=urlcolor,
	linkcolor=linkcolor,
	citecolor=citecolor
	}

\usepackage{aliascnt}

\newcommand{\thdef}[2]{
	\newaliascnt{#1}{theorem}
	\newtheorem{#1}[#1]{#2}
	\aliascntresetthe{#1}
	\newtheorem*{#1*}{#2}
	\expandafter\newcommand\expandafter{\csname #1autorefname\endcsname}{#2}
}

\newtheorem{theorem}{Theorem}[section]
\newtheorem*{theorem*}{Theorem}
\thdef{conjecture}{Conjecture}
\thdef{lemma}{Lemma}
\thdef{corollary}{Corollary}
\thdef{proposition}{Proposition}
\theoremstyle{definition}
\thdef{definition}{Definition}

\theoremstyle{remark}
\thdef{remarkx}{Remark}
\thdef{examplex}{Example}
\thdef{setup}{Setup}
\thdef{claim}{Claim}

\newcommand{\defn}[1]{\textbf{\emph{#1}}}

\usepackage{tikz,tikz-cd}

\newcommand{\PP}{\mathbb{P}}
\newcommand{\RR}{\mathbb{R}}

\newcommand{\C}{\mathsf{C}}
\newcommand{\T}{\mathsf{T}}
\newcommand{\F}{\mathsf{F}}

\newcommand{\W}{\mathsf{W}}

\newcommand{\Z}{\mathsf{Z}}
\newcommand{\Y}{\mathsf{Y}}

\newcommand{\X}{\mathsf{X}}
\newcommand{\Si}{\mathsf{S}}

\newcommand{\U}{\mathsf{U}}

\newcommand{\Dgn}[2][]{\mathsf{D}_{#1}(#2)}

\newcommand{\kahcl}{\gamma}

\newcommand{\sing}{\mathrm{sing}}

\DeclareMathOperator{\codim}{codim}

\DeclareMathOperator{\rank}{rank}

\DeclareMathOperator{\NE}{NE}

\newcommand{\cT}[1]{\mathcal{T}_{#1}} 
\newcommand{\cN}[1]{\mathcal{N}_{#1}} 
\newcommand{\coN}[1]{\mathcal{N}^\vee_{#1}} 

\newcommand{\der}[2][\bullet]{\mathscr{X}^{#1}_{#2}} 
\newcommand{\forms}[2][\bullet]{\Omega^{#1}_{#2}} 
\newcommand{\cA}{\mathcal{A}}
\newcommand{\cF}{\mathcal{F}}

\newcommand{\cL}{\mathcal{L}}

\newcommand{\cO}[1]{\mathcal{O}_{#1}} 

\newcommand{\Resmod}[2][]{\mathrm{Res}^{\mathrm{mod}}_{#1}(#2)}

\newcommand{\coH}[2][\bullet]{\mathsf{H}^{#1}\left(#2\right)}
\newcommand{\coh}[2][\bullet]{\mathsf{h}^{#1}(#2)}

\newcommand{\dd}{\mathrm{d}}

\newcommand{\lie}[1]{\mathscr{L}_{#1}}
\newcommand{\hook}[1]{\iota_{#1}}

\newcommand\restr[2]{{
  \left.\kern-\nulldelimiterspace 
  #1 
  \vphantom{\big|} 
  \right|_{#2} 
  }}